\begin{document}
\renewcommand{\theequation}{\arabic{section}.\arabic{equation}}
%
\title{Hermite spectral method to 1D forward Kolmogorov equation and its application to nonlinear filtering problems}

%
%
%

\author{Xue~Luo~
        and~Stephen~S.-T.~Yau,~\IEEEmembership{Fellow,~IEEE}\\[9pt]
	{\large Dedicated to Professor Wing-Shing Wong on the occasion of his 60th birthday}%
\thanks{Manuscript received July 12, 2012; revised January 2, 2013 and April 5, 2013. This work is supported by the National Nature Science Foundation of China
(Grant No. 31271408) and the start-up fund from Tsinghua University.}
\thanks{X. Luo is with School of Mathematics and Systems Science, Beijing University of Aeronautics and Astronautics (Beihang University), Beijing, 100083, P. R. China. (e-mail: luoxue0327@163.com).}
\thanks{S. S.-T. Yau is with Department of Mathematical Sciences, Tsinghua University, Beijing, 100084, P.R.China. (e-mail: yau@uic.edu).}
\thanks{Color versions of one or more of the figures in this paper are available online at http://ieeexplore.ieee.org}
}

%
%

\markboth{IEEE TRANSACTIONS ON AUTOMATIC CONTROL,~Vol.~XX, No.~X, XXXX~2013}%
{Shell \MakeLowercase{\textit{et al.}}: Bare Demo of IEEEtran.cls for Journals}
%



\maketitle

\newtheorem{definition}{Definition}
\renewcommand{\thedefinition}{\arabic{section}.\arabic{definition}}
\newtheorem{proposition}{Proposition}
\renewcommand{\theproposition}{\arabic{section}.\arabic{proposition}}
\newtheorem{theorem}{Theorem}
\renewcommand{\thetheorem}{\arabic{section}.\arabic{theorem}}
\newtheorem{lemma}{Lemma}
\renewcommand{\thelemma}{\arabic{section}.\arabic{lemma}}
\newtheorem{corollary}{Corollary}
\renewcommand{\thecorollary}{\arabic{section}.\arabic{corollary}}
\newtheorem{remark}{Remark}
\renewcommand{\theremark}{\arabic{section}.\arabic{remark}}
\renewcommand{\thefigure}{\arabic{section}.\arabic{figure}}
\renewcommand{\thetable}{\arabic{section}.\arabic{figure}}

\begin{abstract}
In this paper, we investigate the Hermite spectral method (HSM) to numerically solve the forward Kolmogorov equation (FKE). A useful guideline of choosing the scaling factor of the generalized Hermite functions is given in this paper. It greatly improves the resolution of HSM. The convergence rate of HSM to FKE is analyzed in the suitable function space and has been verified by the numerical simulation. As an important application and our primary motivation to study the HSM to FKE, we work on the implementation of the nonlinear filtering (NLF) problems with a real-time algorithm developed by S.-T. Yau and the second author in 2008. The HSM to FKE is served as the off-line computation in this algorithm. The translating factor of the generalized Hermite functions and the moving-window technique are introduced to deal with the drifting of the posterior conditional density function of the states in the on-line experiments. Two numerical experiments of NLF problems are carried out to illustrate the feasibility of our algorithm. Moreover, our algorithm surpasses the particle filters as a  real-time solver to NLF. 
\end{abstract}

\begin{IEEEkeywords}
	Computational methods, Hemite spectral method, Forward Kolmogorov equations, Filtering.
\end{IEEEkeywords}

%

\section{Introduction}

\setcounter{equation}{0}
%
%
%
%
\IEEEPARstart{T}{he} central problem in the field of nonlinear filtering (NLF) is to give the instantaneous and accurate estimation of the states based on the noisy observations, if enough computational resources are provided. Nowadays, the most popular method is the particle filters (PF), refer to \cite{AMGC}, \cite{BC} and references therein. The main drawback of this method is that it is hard to be implemented as a real-time solver, due to its essence of Monte Carlo simulation. Hence, it is necessary to develop a real-time solver to the NLF problems. In 1960s, Duncan \cite{D}, Mortensen \cite{M} and Zakai \cite{Z} independently derived the so-called DMZ equation, which the unnormalized conditional density function of the states satisfies. Hence the central problem in NLF is translated into solving the DMZ equation in the real time and memoryless manner. It is worthy to point out that the ``real-time" and ``memoryless" are the most important properties one would like to maintain in the design of the optimal/suboptimal nonlinear filters for real applications. More specifically, ``memoryless" refers that one only needs the latest observation to update the estimation of the states without refering back to any earlier observation history; ``real time" means that the decision of the states is made on the spot, while the observation data keep coming in. 

It is well known that the exact solution to the DMZ equation,
generally speaking, can not be written in a closed form. With the well-posedness theory of the DMZ equation in mind, many mathematicians make efforts to seek an efficient algorithm to construct
a ``good" approximate solution to the DMZ equation. One of the methods is the splitting-up method from the
Trotter product formula, which was first described in Besoussan,
Glowinski, and Rascanu \cite{BGR}, \cite{BGR2}. It has been extensively studied in many  articles later, for instance \cite{GK}, \cite{I}, \cite{IR} and \cite{N}. In 1990s, Lototsky, Mikulevicius and Rozovskii \cite{LMR} developed a new algorithm (so-called S$^3$-algorithm) based on the Cameron-Martin
  version of Wiener chaos expansion. However, the above methods require the boundedness of the drifting term and the observation term ($f$ and $h$ in \eqref{Ito SDE}), which leaves out even the linear case. To overcome this restriction, Yau and Yau \cite{YY3} developed a novel algorithm to solve the ``pathwise-robust" DMZ equation, where the boundedness of the drift term and observation term is replaced by some mild growth conditions on $f$ and $h$. Nevertheless, they still made the assumption that the drift term, the observation term and the diffusion term are ``time-invariant". That is to say, $f$, $h$ and $g$ in \eqref{Ito SDE} are not explicitly time-dependent. In \cite{LY}, we generalized Yau-Yau's algorithm to the most general settings of the NLF problems, i.e. the ``time-varying" case, where $f$, $h$ and $g$ could be explicitly time-dependent. 

Our study of solving the forward Kolmogorov equation (FKE) by the Hermite spectral method (HSM) is closely related to the implementation of the algorithm developed in \cite{LY}. The detailed formulation of our algorithm could be found in appendix A or \cite{LY}. Briefly speaking, in our algorithm, we start from the signal based model:
\begin{equation}\label{Ito SDE}
   \left\{ \begin{aligned}
        dx_t &= f(x_t,t)dt+g(x_t,t)dv_t,\\
        dy_t &= h(x_t,t)dt+dw_t,
\end{aligned} \right.
\end{equation}
where $x_t$ is a vector of the states of the system at time
$t$ with $x_0$ satisfying some initial distribution and $y_t$ is a vector of the
observations at time $t$ with $y_0=0$. $v_t$ and $w_t$ are vector Brownian motion processes with $E[dv_tdv_t^T]=Q(t)dt$ and $E[dw_tdw_t^T]=S(t)dt$, $S(t)>0$, respectively. The DMZ equation is derived as
\begin{equation}\label{DMZ eqn}
   \left\{ \begin{aligned}
        d\sigma(x,t) &=
        L\sigma(x,t)dt+\sigma(x,t)h^{T}(x,t)S^{-1}(t)dy_t\\
        \sigma(x,0) &=\sigma_0(x),
\end{aligned} \right.
\end{equation}
where $\sigma(x)$ is the unnormalized conditional density funciton, $\sigma_0(x)$ is the density of the initial states $x_0$, and
\begin{align}\label{L}
    L(\ast) \equiv \frac12\sum_{i,j=1}^n\frac{\partial^2}{\partial x_i\partial
    x_j}\left[\left(gQg^{T}\right)_{ij}\ast\right]-\sum_{i=1}^n\frac{\partial(f_i\ast)}{\partial
    x_i}.
\end{align}
To maintain the real-time property, solving the DMZ equation is translated into solving a FKE off-line and updating the initial data on-line at the beginning of each time interval. Let $\mathcal{P}_k=\{0=\tau_0\leq \tau_1\leq\cdots\leq\tau_k=T\}$ be a
partition of $[0,T]$. The FKE needs to be solved at each time step is
\begin{align}\label{Kolmogorov equation}
    \frac{\partial u_i}{\partial t}(x,t) = \left(L-\frac12h^{T}S^{-1}h\right)u_i(x,t)\quad \textup{on}\ [\tau_{i-1},\tau_i],
\end{align}
where $L$ is defined as \textup{(\ref{L})}. The initial data is updated as follows
\begin{equation}\label{YY algorithm i}
   \left\{ \begin{aligned}
	 u_1(x,0) &= \sigma_0(x)\\
\textup{or}\\
        u_i(x,\tau_{i-1})&=\exp{\left[h^{T}(x,\tau_{i-1})S^{-1}(\tau_{i-1})\right.}\\
		&\phantom{=\exp{aaa}}{\left.\left(y_{\tau_{i-1}}-y_{\tau_{i-2}}\right)\right]}u_{i-1}(x,\tau_{i-1}),\\
	&\phantom{=\exp{aaaaaaaaaa}} i\geq2,
\end{aligned} \right.
\end{equation}
where $u_i$ is transformed from $\sigma$, see the detailed formulation of our algorithm in the appendix A or \cite{LY}. From the above description, it is not hard to see that the FKE \eqref{Kolmogorov equation} needs to be solved repeatedly on each time interval $[\tau_{i-1},\tau_i]$. Thus, it is crucial to obtain a good approximate solution to \eqref{Kolmogorov equation}. In this paper, we adopt HSM to solve FKE for two reasons: on the one hand, HSM is particularly suitable for functions
defined on the unbounded domain which decays exponentially at infinity; on the other hand, HSM could be easily patched with the numerical solution obtained in the previous time step while the moving-window technique is in use in the on-line experiments.

The HSM itself is also a field of research, which could be traced back to 1970s. In
\cite{GO}, Gottlieb et. al. gave the example $\sin{x}$ to illustrate
the poor resolution of Hermite polynomials. To resolve $M$ wavelength of $\sin{x}$, it
requires nearly $M^2$ Hermite polynomials. Due to this fact, they
doubted the usefulness of Hermite polynomials as basis. The Hermite functions inherit the same deficiency from the polynomials. Moreover, it is lack of fast Hermite transform (some analogue of fast Fourier transform). Despite of all these drawbacks, the
HSM has its inherent strength. Many physical
models need to solve a differential equation on an unbounded domain,
and the solution decays exponentially at
infinity. From the computational point of view, it is hard to describe the rate of decay
at infinity numerically or to impose some artificial boundary condition cleverly on some faraway ``boundary". Therefore the Chebyshev or Fourier
spectral methods are not so useful in this situation. As to the HSM, how
to deal with the behavior at infinity is not necessary. Recent applications of the HSM
can be found in \cite{FGT}, \cite{FK}, \cite{GSX}, \cite{SH}, \cite{XW}, etc.

To overcome the poor resolution, a scaling factor is necessary to be introduced into the Hermite functions, refer to \cite{B}, \cite{B1}.
It is shown in \cite{B1} that the scaling factor should be
chosen according to the truncated modes $N$ and the asymptotical
behavior of the function $f(x)$, as $|x|\rightarrow\infty$. Some
efforts have been made in seeking the suitable scaling factor
$\alpha$, see \cite{B1}, \cite{H}, \cite{SH}, etc. To optimize the
scaling factor is still an open problem, even in the case that
$f(x)$ is given explicitly, to say nothing of the exact solution to
a differential equation, which is generally unknown a-priori. 
Although some investigations about the scaling factor have been made theoretically, as far as we know, there is no practical guidelines of choosing a suitable scaling factor. Nearly all the scaling factors in the papers with the application of HSM are obtained by the trial-and-error method. Thus, we believe it is necessary and useful to give a practical strategy to pick an appropriate scaling
factor and the corresponding truncated mode for at least the most commonly used types of functions, i.e. the
Gaussian type and the super-Gaussian type functions. The
strategy we are about to give only depends on the asymptotic behavior of the function. In
the scenario where the solution of some differential equation needs to be approximated (the exact solution is unknown), we could use asymptotical analysis to obtain its asymptotic behavior. Thus, our strategy of picking the suitable scaling factor is still applicable. A numerical experiment is also included to
verify the feasibility of our strategy. Although it may
not be optimal with respect to the accuracy, our strategy provides a useful
guideline for the implementations of HSM. In this paper, the precise convergence rate of the HSM to FKE is obtained in suitable funciton space by numerical analysis and verified by a numerical example. 

Let us draw our attention back to the implementation of our algorithm to NLF problems. Through our study of HSM to FKE, the off-line data could be well prepared. However, when synchronizing the off-line data with the on-line experiments, to be more specifical, updating the initial data according to \eqref{YY algorithm i} on-line, another difficulty arises due to the drifting of the conditional density function. The untranslated Hermite functions with limited truncation modes could only resolve the function well, if it is concentrated in the neighborhood of the origin. Let us call this neighborhood as a ``window". Unfortunately, the density function will probably drift out of the current ``window". The numerical evidence is displayed in Fig. \ref{fig-waterfall}. To efficiently solve this problem, we for the first time introduce the translating
factor to the Hermite functions and the moving-window technique for the on-line experiments. The translating factor helps the moving-window technique to be implemented more neatly and easily. Essentially speaking, we shift the windows back and forth according to the ``support" of the density function, by tuning the translating factor. 

This paper is organized as follows. Section II introduces the
generalized Hermite functions and the guidelines of choosing suitable scaling factor to improve the resolution; section III focuses on the analysis of the convergence
rate of HSM to FKE and a numerical verification is displayed. Section IV is devoted to the application of the NLF problems. The translating factor and the moving-window technique are addressed in detail. Numerical simulations of two NLF problems solved by our algorithm are illustrated, compared with the particle filter. For the readers' convenience, we include the detailed formulation of our algorithm in appendix A and the proof of Theorem \ref{theorem-truncation error} in appendix B.  

\section{Generalized Hermite functions}

\setcounter{equation}{0}

Let us introduce the generalized Hermite functions and
derive some properties inherited from the physical Hermite
polynomials. 

Let $L^2(\mathbb{R})$ be the Lebesgue space, which equips with the
norm $||\cdot||=(\int_{\mathbb{R}}|\cdot|^2 dx)^{\frac12}$ and the
scalar product $\langle\cdot,\cdot\rangle$. Let $\mathcal{H}_n(x)$  be the physical Hermite polynomials given by
$\mathcal{H}_n(x)=(-1)^ne^{x^2}\partial_x^ne^{-x^2}$, $n\in\mathbb{Z}$ and $n\geq0$. The three-term recurrence
\begin{align}\notag\label{recurrence}
    &\mathcal{H}_0\equiv1,\quad \mathcal{H}_1(x)=2x\\
	\textup{and}\quad  &\mathcal{H}_{n+1}(x)=2x\mathcal{H}_n(x)-2n\mathcal{H}_{n-1}(x)
\end{align}
is more handy in implementations. One of the well-known and useful
facts of Hermite polynomials is that they are mutually orthogonal
with respect to the weight $w(x)=e^{-x^2}$. We define our generalized Hermite functions as
\begin{align}\label{new hermite}
    H_n^{\alpha,\beta}(x)=\frac1{\sqrt{2^nn!}}\mathcal{H}_n(\alpha(x-\beta))e^{-\frac12\alpha^2(x-\beta)^2},
\end{align}
for $n\in\mathbb{Z}$ and $n\geq0$, where $\alpha>0$, $\beta\in\mathbb{R}$ are some constants, namely the scaling factor and the translating factor, respectively. It is readily to derive the following properties for (\ref{new
hermite}):
\begin{enumerate}
    \item\label{orthogonal of H_n^alpha,beta} $\{H_n^{\alpha,\beta}\}_{n=0}^\infty$ forms an orthogonal basis of
    $L^2(\mathbb{R})$, i.e.
        \begin{align}\label{orthogonal}
            \int_{\mathbb{R}}H_n^{\alpha,\beta}(x)H_m^{\alpha,\beta}(x)dx=\frac{\sqrt{\pi}}\alpha\delta_{nm},
        \end{align}
    where $\delta_{nm}$ is the Kronecker function.
    \item \label{eigenvalue} $H_n^{\alpha,\beta}(x)$ is the $n^{\textup{th}}$
    eigenfunction of the following Strum-Liouville problem
        \begin{align}\notag\label{S-L}
            e^{\frac12\alpha^2(x-\beta)^2}&\partial_x(e^{-\alpha^2(x-\beta)^2}\partial_x(e^{\frac12\alpha^2(x-\beta)^2}u(x)))\\
	&+\lambda_nu(x)=0,
        \end{align}
    with the corresponding eigenvalue $\lambda_n=2\alpha^2n$.
    \item By convention, $H_n^{\alpha,\beta}\equiv0$, for $n<0$. For
    $n\in\mathbb{Z}$ and $n\geq0$, the three-term recurrence holds:
    \begin{align}\label{recurrence for RT hermite function}
        2\alpha(x-\beta)H_n^{\alpha,\beta}(x) =&
                \sqrt{2n}H_{n-1}^{\alpha,\beta}(x)\\\notag
	&+\sqrt{2(n+1)}H_{n+1}^{\alpha,\beta}(x);\\\notag
        \textup{or}\\\notag
        2\alpha^2(x-\beta)H_n^{\alpha,\beta}(x)=&\sqrt{\lambda_n}H_{n-1}^{\alpha,\beta}(x)\\\notag
	&+\sqrt{\lambda_{n+1}}H_{n+1}^{\alpha,\beta}(x).
    \end{align}
    \item The derivative of $H_n^{\alpha,\beta}(x)$ is a linear combination of $H_{n-1}^{\alpha,\beta}(x)$ and $H_{n+1}^{\alpha,\beta}(x)$:
    \begin{align}\label{derivative}\notag
        &\partial_xH_n^{\alpha,\beta}(x)\\\notag
		 &=\frac12\sqrt{\lambda_n}H_{n-1}^{\alpha,\beta}(x)-\frac12\sqrt{\lambda_{n+1}}H_{n+1}^{\alpha,\beta}(x)\\
         &=\sqrt{\frac n2}\alpha
         H_{n-1}^{\alpha,\beta}(x)-\sqrt{\frac{n+1}2}\alpha
         H_{n+1}^{\alpha,\beta}(x).
    \end{align}
    \item Property 1) and 4) yield the ``orthogonality" of $\{\partial_xH_n^{\alpha,\beta}(x)\}_{n=0}^\infty$:
    \begin{align}\notag\label{orthogonality of derivative}
    &\int_{\mathbb{R}}\partial_xH_n^{\alpha,\beta}(x)\partial_xH_m^{\alpha,\beta}(x)dx\\
 =&\left\{ \begin{aligned}
        &\sqrt{\pi}\alpha(n+\frac12)=\frac{\sqrt{\pi}}{4\alpha}(\lambda_n+\lambda_{n+1}),\ \textup{if}\
    m=n;\\
        &-\frac\alpha2\sqrt{\pi(l+1)(l+2)}=-\frac{\sqrt{\pi}}{4\alpha}\sqrt{\lambda_{l+1}\lambda_{l+2}},\\
        &\phantom{\alpha(n+\frac12)}\ l=\min\{n,m\},\quad\textup{if}
    \ |n-m|=2;\\
        &0,\quad\textup{otherwise}.
    \end{aligned} \right.
\end{align}
\end{enumerate}

The generalized Hermite functions form a complete orthogonal base
in $L^2(\mathbb{R})$. That is, any function $u\in L^2(\mathbb{R})$ can be written in the
form
\begin{align*}
    u(x)=\sum_{n=0}^\infty\hat{u}_nH_n^{\alpha,\beta}(x),
\end{align*}
where $\{\hat{u}_n\}_{n=0}^\infty$ are the Fourier-Hermite
coefficients, given by
\begin{align}\label{Hermite representation}
    \hat{u}_n=\frac\alpha{\sqrt{\pi}}\int_{\mathbb{R}}u(x)H_n^{\alpha,\beta}(x)dx.
\end{align}

Let us denote the subspace spanned by the first $N+1$ generalized Hermite functions by $\mathcal{R}_N$:
\begin{align}\label{RN}
    \mathcal{R}_N=\textup{span}\left\{H_0^{\alpha,\beta}(x),\cdots,H_N^{\alpha,\beta}(x)\right\}.
\end{align}

In the sequel, we follow the convention in the asymptotic analysis that
$a\sim b$ means that there exists some constants $C_1,C_2>0$ such
that $C_1a\leq b\leq C_2a$; $a\lesssim b$ means that there exists
some constant $C_3>0$ such that $a\leq C_3b$. Here, $C_1,C_2$ and $C_3$ are generic constants independent of $\alpha$, $\beta$ and $N$.

\subsection{Orthogonal projection and approximation}

It is readily shown in \cite{XW} for $\alpha>0$, $\beta=0$ that the difference between an arbitrary function and its orthogonal projection onto $\mathcal{R}_N$ in some suitable function space could be precisely estimated in terms of the scaling factor $\alpha$ and the truncation mode $N$. Let us first introduce the function space $W^r_{\alpha,\beta}(\mathbb{R})$, for any integer $r\geq0$,
\begin{align}\notag\label{Hgamma}
    W^{r}_{\alpha,\beta}(\mathbb{R}):=&\left\{u\in L^2(\mathbb{R}):\, ||u||_{r,\alpha,\beta}<\infty,\right.\\
    &\phantom{u\in L^2(\mathbb{R}):}\left.||u||_{r,\alpha,\beta}^2:=\sum_{k=0}^\infty\lambda_{k+1}^r\hat{u}_k^2\right\},
\end{align}
where $\lambda_k$ is in \eqref{S-L} and $\hat{u}_k$ is the Fourier-Hermite coefficient in \eqref{Hermite representation}. We shall denote $W^{r}(\mathbb{R})$ for short, if no confusion will arise. Also, the norms are denoted briefly as $||\cdot||_{r}$.  The larger $r$ is, the smaller space $W^{r}(\mathbb{R})$ is, and the smoother the
functions in $W^r(\mathbb{R})$ are. The index $r$ can be viewed as the indicator of the
regularity of the functions. 

Let us define the $L^2-$orthogonal projection
$P_{N}^{\alpha,\beta}:\,L^2(\mathbb{R})\rightarrow\mathcal{R}_N$, i.e.
given $v\in L^2(\mathbb{R})$,
\begin{align}\label{projection}
    \langle v-P_N^{\alpha,\beta}v,\phi\rangle=0,\quad\forall\phi\,\in\mathcal{R}_N.
\end{align}
The superscript $\alpha,\beta$ will be dropped in
$P_N^{\alpha,\beta}$ in the sequel if no confusion will arise. More
precisely,
\begin{align*}
    P_Nv(x):=\sum_{n=0}^N\hat{v}_nH_n^{\alpha,\beta}(x),
\end{align*}
where $\hat{v}_n$ are the Fourier-Hermite coefficients defined in
(\ref{Hermite representation}). And the truncated error
$||u-P_Nu||_r$, for any integer $r\geq0$, has been essentially estimated in Theorem 2.3, \cite{GSX} for $\alpha=1$, $\beta=0$, and in
Theorem 2.1, \cite{XW} for arbitrary $\alpha>0$ and $\beta=0$. For
arbitrary $\alpha>0$ and $\beta\neq0$, the estimate still holds. 
\begin{theorem}\label{theorem-truncation error}
    For any $u\in W^r(\mathbb{R})$ and any integer $0\leq\mu\leq r$, we have
    \begin{align}\label{truncated error}
        |u-P_Nu|_\mu\lesssim\alpha^{\mu-r-\frac12}N^{\frac{\mu-r}2}||u||_r,
    \end{align}
    where $|u|_\mu:=||\partial_x^\mu u||$ are the seminorms, if $N\gg1$.
\end{theorem}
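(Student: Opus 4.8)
The plan is to deduce the general case $\beta\neq 0$ from the already-established case $\beta=0$ (Theorem~2.1 of \cite{XW}, valid for all $\alpha>0$), using only the fact that the translating factor produces a rigid shift of the independent variable under which every object in \eqref{truncated error} transforms covariantly. From \eqref{new hermite} we have the identity $H_n^{\alpha,\beta}(x)=H_n^{\alpha,0}(x-\beta)$, so the shift operator $(T_\beta w)(x):=w(x+\beta)$ sends the basis $\{H_n^{\alpha,\beta}\}$ to $\{H_n^{\alpha,0}\}$, sends the subspace $\mathcal{R}_N$ built on $\beta$ onto the one built on $0$, and is a unitary map of $L^2(\mathbb{R})$ onto itself. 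Given $u\in W^r_{\alpha,\beta}(\mathbb{R})$, I would set $v:=T_\beta u=u(\cdot+\beta)$ and prove three things: the Fourier--Hermite coefficients are unchanged, the $W^r$-norm is unchanged, and the spectral projection and the seminorms intertwine with $T_\beta$. Then \eqref{truncated error} for $u$ is just the $\beta=0$ estimate applied to $v$.

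First I would verify the coefficient identity by the change of variables $y=x-\beta$ in \eqref{Hermite representation}: $\widehat{v}_n^{\,\alpha,0}=\tfrac{\alpha}{\sqrt{\pi}}\int_{\mathbb{R}}u(x+\beta)H_n^{\alpha,0}(x)\,dx=\tfrac{\alpha}{\sqrt{\pi}}\int_{\mathbb{R}}u(y)H_n^{\alpha,\beta}(y)\,dy=\widehat{u}_n^{\,\alpha,\beta}$. Since the eigenvalues $\lambda_k=2\alpha^2k$ in \eqref{S-L} do not involve $\beta$, the defining series in \eqref{Hgamma} immediately gives $v\in W^r_{\alpha,0}(\mathbb{R})$ with $\|v\|_{r,\alpha,0}=\|u\|_{r,\alpha,\beta}$. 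Applying $T_\beta$ term by term to $P_N^{\alpha,\beta}u=\sum_{n=0}^N\widehat{u}_n^{\,\alpha,\beta}H_n^{\alpha,\beta}$ and using $T_\beta H_n^{\alpha,\beta}=H_n^{\alpha,0}$ and the coefficient identity yields $T_\beta(P_N^{\alpha,\beta}u)=\sum_{n=0}^N\widehat{v}_n^{\,\alpha,0}H_n^{\alpha,0}=P_N^{\alpha,0}v$ (alternatively this follows from the Galerkin characterization \eqref{projection} together with the unitarity of $T_\beta$ and uniqueness of the projection). Finally, $T_\beta$ commutes with $\partial_x$ and preserves $\|\cdot\|$, so for every integer $0\le\mu\le r$,
\begin{align*}
    |u-P_N^{\alpha,\beta}u|_\mu=\big\|\partial_x^\mu(u-P_N^{\alpha,\beta}u)\big\|=\big\|T_\beta\partial_x^\mu(u-P_N^{\alpha,\beta}u)\big\|=\big\|\partial_x^\mu(v-P_N^{\alpha,0}v)\big\|=|v-P_N^{\alpha,0}v|_\mu.
\end{align*}
Combining the last display with $\|v\|_{r,\alpha,0}=\|u\|_{r,\alpha,\beta}$ and the bound $|v-P_N^{\alpha,0}v|_\mu\lesssim\alpha^{\mu-r-\frac12}N^{\frac{\mu-r}2}\|v\|_{r,\alpha,0}$ from \cite{XW} for $N\gg1$ gives exactly \eqref{truncated error}; the implied constant is the one from \cite{XW}, which is independent of $\alpha$ and $N$, and the reduction makes the $\beta$-independence automatic.

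I expect the only point that needs care to be bookkeeping rather than a genuine obstacle: one must check that the seminorm convention $|\cdot|_\mu=\|\partial_x^\mu\cdot\|$ used here matches the form in which \cite{XW} states its estimate, and confirm that the generic constant is uniform in $\alpha,\beta,N$ as claimed in the excerpt. If \cite{XW} instead records its bound in terms of a Hermite-weighted seminorm, one short extra lemma — comparing $\sum_n\lambda_{n+1}^\mu\widehat{w}_n^2$ with $\|\partial_x^\mu w\|^2$ by iterating the derivative formula \eqref{derivative} and the recurrence \eqref{recurrence for RT hermite function} — closes the gap. A fully self-contained argument, which I would only give if the reduction to \cite{XW} were not available, would use \eqref{recurrence for RT hermite function} and \eqref{derivative} to write $|w|_\mu^2$ as $\alpha^{2\mu-1}$ times a series in the $\widehat{w}_n$ with polynomial-in-$n$ weights and then estimate the tail $\sum_{n>N}$ against $\|w\|_r^2$; but since that computation is precisely the content of Theorem~2.1 of \cite{XW}, the covariance argument above is the economical route, and it is the one I would place in Appendix~B.
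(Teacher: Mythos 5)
Your reduction is correct, but it is a genuinely different route from the one the paper takes. The paper's Appendix~B proof is a self-contained induction on $\mu$: the base case $\mu=0$ follows from orthogonality and the weighted-coefficient definition of $\|\cdot\|_r$, and the inductive step uses the splitting $|u-P_Nu|_\mu\le|\partial_xu-P_N\partial_xu|_{\mu-1}+|P_N\partial_xu-\partial_xP_Nu|_{\mu-1}$, where the commutator is computed explicitly via \eqref{derivative} to be $\tfrac12\sqrt{\lambda_{N+1}}\bigl[\hat{u}_NH_{N+1}^{\alpha,\beta}+\hat{u}_{N+1}H_N^{\alpha,\beta}\bigr]$ and then bounded using tail estimates on $\hat{u}_N$ and the seminorms of single basis functions. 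You instead exploit the identity $H_n^{\alpha,\beta}(x)=H_n^{\alpha,0}(x-\beta)$ to show that the shift $T_\beta$ is unitary, commutes with $\partial_x$, preserves the Fourier--Hermite coefficients and hence the $W^r$ norm, and intertwines $P_N^{\alpha,\beta}$ with $P_N^{\alpha,0}$, so the general statement is the $\beta=0$ statement of Theorem~2.1 of \cite{XW} in disguise. Your argument is shorter and makes the $\beta$-independence of the constant structurally obvious rather than something to be tracked through an induction; its cost is that it is only as strong as the cited $\beta=0$ result, and you rightly flag the one point needing verification, namely that the seminorm convention $|w|_\mu=\|\partial_x^\mu w\|$ matches the form of the estimate in \cite{XW} (otherwise the comparison lemma you sketch is needed). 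The paper's computation, by contrast, is self-contained and reproves the $\beta=0$ case along the way. One cosmetic slip: the substitution in your coefficient identity is $y=x+\beta$, not $y=x-\beta$; the resulting identity is nevertheless correct.
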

The proof is extremely similar as those in \cite{GSX} and \cite{XW}. Thus, we omit it here and include it in appendix B for the readers' convenience.

\subsection{Guidelines of the scaling factor} 

From Theorem \ref{theorem-truncation error}, it is known for sure that any function in $W^r(\mathbb{R})$ could be approximated well by the generalized Hermite functions, provided the truncation mode $N$ is large enough. However, in practice, ``sufficiently" large $N$ is a chanllenge of computer capacity. To improve the resolution of Hermite functions with reasonably large $N$, we need the scaling factor $\alpha$, as pointed out in \cite{B1}. Many efforts have been made along this direction, refer to \cite{B},
\cite{B1}, \cite{H}, etc. However, the optimal choice of $\alpha$ (with respect to the truncation error) is still an open problem. In this subsection, we give a practical guideline to choose an appropriate scaling factor for the Gaussian type and super-Gaussian type functions.

It is well known that, for smooth functions $f(x)=\sum_{n=0}^\infty
\hat{f}_nH_n^{\alpha,\beta}(x)$, the exponential decay of
$\left|\hat{f}_n\right|$ with respect to $n$ implies that the infinite sum is dominated by the
first $N$ terms, that is,
\[
    \left|f(x)-\sum_{n=0}^N\hat{f}_nH_n^{\alpha,\beta}(x)\right|\approx\mathcal{O}\left(\hat{f}_{N+1}\right),  
\]
for $N\gg1$. Thus, the suitable scaling factor is supposed to get the Fourier-Hermite coefficients decaying as fast as possible. Once the coefficient approaching the machine error (say $10^{-16}$), many other factors such as the roundoff error will come
into play. Hence, it is wise to truncate the series here. Therefore, we need some guidelines of choosing not only the suitable scaling factor $\alpha$ but also the corresponding truncation mode $N$. 

Suppose the function $f(x)$ concentrates in the neighborhood of the origin and behaves asymptotically as $e^{-p|x|^k}$ with some $p>0$ and
$k\geq2$, as $|x|\rightarrow+\infty$. Our guidelines are motivated by the following observations:
\begin{enumerate}
    \item The function $f$ decays exponentially fast, as $|x|\rightarrow\infty$. So,
    $\hat{f}_n\approx\int_{-L}^Lf(x)H_n^{\alpha,\beta}(x)dx$, provided $L$
    is large enough, due to \eqref{Hermite representation}.
    \item For the exact Gaussian function $e^{-px^2}$, $p>0$, the
    optimal $\alpha$ is naturally to be $\sqrt{2p}$ with the truncated mode
    $N=1$. In fact, with this choice, $e^{-px^2}=
    H_0^{\alpha,0}(x)$ and $e^{-px^2}$ is orthogonal to all the
    rest of $H_n^{\alpha,0}$, $n>0$. That is,
    $\widehat{(e^{-px^2})}_0\neq0$ and $\widehat{(e^{-px^2})}_n\equiv0$,
    $n\geq1$. This suggests that the more matching the asymptotical
    behavior of $f$ to $e^{-\frac12\alpha^2x^2}$, the faster the Fourier-Hermite coefficients decays, and the smaller
    truncation mode $N$ is.
    \item It is natural to adopt the Gaussian-Hermite quadrature
    method to compute the Fourier-Hermite
    coefficients by \eqref{Hermite representation}. The truncation mode $N$ has to be chosen such that
    the roots of Hermite polynomial $\mathcal{H}_{N+1}$ cover the domain $[-\alpha L,\alpha
    L]$ where the integral \eqref{Hermite representation} is contributed most from both $f$ and $H_n^{\alpha,0}$, $n=0,\cdots,N$.
\end{enumerate}

We describe our guidelines for the Gaussian type and the super-Gaussian type functions separately as follows.

Case I.\quad Gaussian type, i.e. $f(x)\sim e^{-px^2}$, $p>0$, as $|x|\rightarrow+\infty$.
\begin{enumerate}
    \item $e^{-px^2}\sim e^{-\frac12\alpha^2x^2}$ as $|x|\rightarrow+\infty$,
    which yields $\alpha\approx\sqrt{2p}$;
    \item The integrand in (\ref{Hermite representation}) is approximately
    $e^{-2px^2}$. Using the machine error
    $10^{-16}$ to decide the domain of interest $L$, i.e.
    $e^{-2pL^2}\approx10^{-16}$, yielding that
    $L\approx\sqrt{8p^{-1}\ln{10}}$;
    \item Determine the truncation mode $N$ such that the roots of Hermite polynomial
    $\mathcal{H}_{N+1}$ covers approximately $(-\alpha L,\alpha L)$,
    where $\alpha L\approx4\sqrt{\ln{10}}$.
\end{enumerate}

Case II.\quad Super-Gaussian type, i.e. $f(x)\sim e^{-px^k}$, as $|x|\rightarrow+\infty$ for
some $k>2$, $p>0$.
\begin{enumerate}
    \item Notice that $e^{-\frac12\alpha^2x^2}\gg e^{-px^k}$, when $x\gg1$.
    Thus, we require that $e^{-\frac12\alpha^2x^2}\approx10^{-16}$,
    which implies that $\alpha L\approx\sqrt{32\ln{10}}$;
    \item We match $e^{-px^k}\approx e^{-\frac12\alpha^2x^2}$ near $x=\pm L$
    yields that $\alpha\approx\sqrt{2p}L^{\frac k2-1}$. Hence,
    $L\approx(16p^{-1}\ln{10})^{\frac1k}$,
    $\alpha\approx 2^{\frac52-\frac4k}p^{\frac1k}(\ln{10})^{\frac12-\frac1k}$;
    \item Determine the truncation mode $N$ such that the roots of Hermite polynomial
    $\mathcal{H}_{N+1}$ cover approximately $(-\alpha L,\alpha L)$.
\end{enumerate}

\begin{figure}[!t]
          \centering
             \includegraphics[trim = 30mm 85mm 30mm 85mm, clip, scale=0.6]{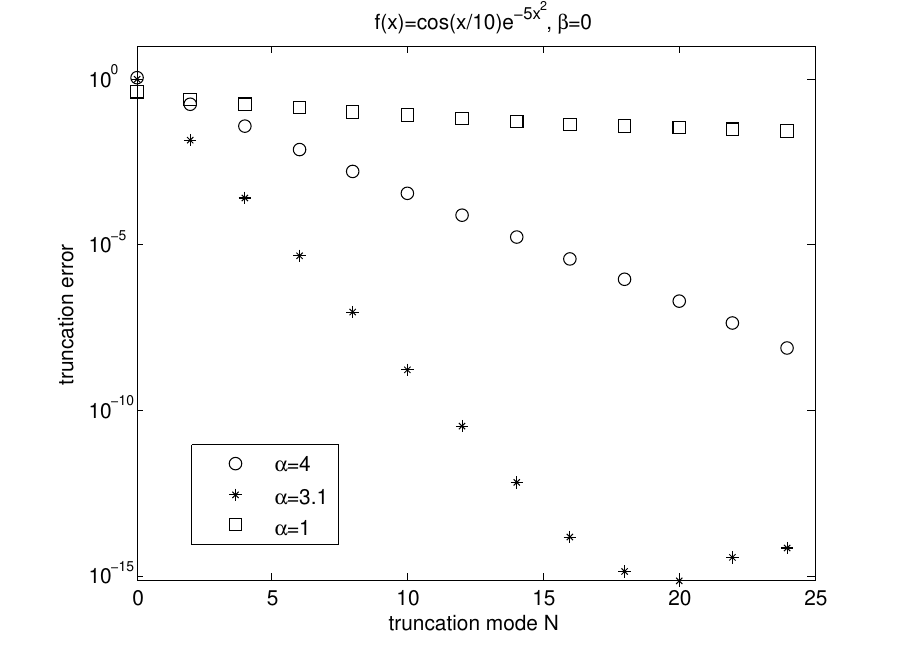}
          \caption{\small The truncation error v.s. the truncation mode of $f(x)=\cos{\left(\frac x{10}\right)}e^{-5x^2}$ is plotted, with $\beta=0$, $\alpha=4,3.1$ and $1$, respectively.}
          \label{fig-scalingGaussian}
\end{figure}

To exam the feasibility of our strategy, we explore the Gaussian
type function $f(x)=e^{-5x^2}\cos{(\frac x{10})}$. According to the
strategy in Case I, we choose the scaling factor
$\alpha\approx\sqrt{10}\approx3.1$,
$L\approx\sqrt{\frac{8\ln{10}}5}\approx1.9194$ and $N\approx24$. As shown in Fig. \ref{fig-scalingGaussian},
the truncation error with $\alpha=3.1$ decays the most fast with respect to the truncation mode $N$ and approaches the machine error at about the $20^{\textup{th}}$ frequency
mode. Meanwhile, the decay of the truncation error with $\alpha=4$ and
$\alpha=1$ are much slower. Moreover, the truncation mode $N=24$ is
appropriate in the sense that the next few coefficients start to grow, due to the roundoff error. 

\begin{remark}
	1)\quad This strategy is very useful. However, it is not the optimal
scaling factor $\alpha$. For example, if $f(x) = e^{-\frac12x^2}$, then the optimal scaling factor $\alpha=1$ and $N=0$, instead of $N=24$ from our guideline.

	2)\quad Although the scaling factor helps to resolve the function concentrated in the neighborhood of the origin, it helps little if the function is peaking away from the origin. The numerical evidence could be found in Table \ref{table}. This is the exact reason why we need to introduce the translating factor to the generalized Hermite functions when applying to the NLF problems, see the discussion of translating factor in section IV.B.
\end{remark}

\section{Hermite spectral method to 1D forward Kolmogorov equation}
\setcounter{equation}{0}

The general 1D FKE is in the form
\begin{align}\label{time-dependent Kolmogorov}
    \left\{ \begin{aligned}
       u_t(x,t) =&p(x,t)u_{xx}(x,t)+q(x,t)u_x(x,t)\\
		&+r(x,t)u(x,t),\quad\textup{for}\
    (x,t)\in\mathbb{R}\times\mathbb{R_+}\\
    u(x,0) &= \sigma_0(x).
    \end{aligned} \right.
\end{align}

The well-posedness of 1D FKE has been investigated in \cite{Be}. We state its key result here.
\begin{lemma}\rm{(Besala, \cite{Be})}\label{existence}
    Let $p(t,x)$, $q(t,x)$, $r(t,x)$ (real valued) together with $p_x$, $p_{xx}$, $q_x$ be locally H\"older continuous in $\mathcal{D}=(t_0,t_1)\times\mathbb{R}$. Assume that
    \begin{enumerate}
        \item $p(t,x)\geq\lambda>0$, $\forall\,(t,x)\in\mathcal{D}$, for some constant $\lambda$;
        \item $r(t,x)\leq0$, $\forall\,(t,x)\in\mathcal{D}$;
        \item $(r-q_x+p_{xx})(t,x)\leq0$, $\forall\,(t,x)\in\mathcal{D}$.
    \end{enumerate}
    Then the Cauchy problem (\ref{time-dependent Kolmogorov}) with the initial condition $u(t_0,x)=u_0(x)$ has a fundamental solution $\Gamma(t,x;s,z)$ which satisfies
    \[
        0\leq\Gamma(t,x;s,z)\leq c(t-s)^{-\frac12}
    \]
    for some constant $c$ and
    \[
        \int_{-\infty}^\infty\Gamma(t,x;s,z)dz\leq1;\quad\int_{-\infty}^\infty\Gamma(t,x;s,z)dx\leq1.
    \]
    Moreover, if $u_0(x)$ is continuous and bounded, then
    \[
        u(t,x)=\int_{-\infty}^\infty\Gamma(t,x;t_0,z)u_0(z)dz
    \]
    is a bounded solution of (\ref{time-dependent Kolmogorov}).
\end{lemma}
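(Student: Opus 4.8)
Since this is Besala's theorem, the plan is the classical one: build the fundamental solution $\Gamma$ by Levi's parametrix method on bounded strips, pass to the limit on $\mathbb R$, and then extract positivity and the two mass inequalities from the maximum principle applied once to \eqref{time-dependent Kolmogorov} and once to its formal adjoint.

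\emph{Construction.} On a strip $(t_0,t_1)\times(-R,R)$ with homogeneous lateral data I would freeze the coefficients and take as parametrix the Gaussian kernel $Z_R(t,x;s,z)=(4\pi p(s,z)(t-s))^{-1/2}\exp\!\big(-(x-z)^2/(4p(s,z)(t-s))\big)$, corrected by the first-order term. Writing $\Gamma_R=Z_R+Z_R\star\Phi_R$ and solving the resulting Volterra equation for $\Phi_R$, the local H\"older continuity of $p,q,r$ (and of $p_x,p_{xx},q_x$, needed to handle the source variables) makes the Volterra iterates converge, and the standard estimates give
\[
0\le\Gamma_R(t,x;s,z)\le C(t-s)^{-1/2}\exp\!\big(-\kappa(x-z)^2/(t-s)\big),
\]
together with the analogous bounds for $\partial_x\Gamma_R,\partial_x^2\Gamma_R,\partial_t\Gamma_R$; in particular $\Gamma_R\le c(t-s)^{-1/2}$. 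The maximum principle gives $\Gamma_R\ge0$, and comparison of Green's functions shows $\{\Gamma_R\}$ is nondecreasing in $R$, so $\Gamma:=\lim_{R\to\infty}\Gamma_R$ exists and inherits the pointwise bound; the strong maximum principle upgrades nonnegativity to $\Gamma>0$ on $\{t>s\}$.

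\emph{Mass inequalities.} For the first, set $v(t,x)=\int_{\mathbb R}\Gamma(t,x;s,z)\,dz$; formally $v$ solves \eqref{time-dependent Kolmogorov} with $v(s,\cdot)\equiv1$, and since condition (2) gives $r\le0$, the constant $1$ is a supersolution ($\partial_t 1-p\,\partial_x^2 1-q\,\partial_x 1-r\cdot 1=-r\ge0$); comparison on each $(-R,R)$ against the monotone approximants $v_R$ then yields $\int\Gamma\,dz\le1$. For the second, regard $w(s,z)=\int_{\mathbb R}\Gamma(t,x;s,z)\,dx$ as a function of $(s,z)$: it solves the formal adjoint backward equation $-w_s=p\,w_{zz}+(2p_z-q)\,w_z+(p_{zz}-q_z+r)\,w$, whose zeroth-order coefficient is exactly $r-q_x+p_{xx}\le0$ by condition (3). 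Hence $1$ is again a supersolution and the same comparison gives $\int\Gamma\,dx\le1$.

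\emph{The solution formula and the main obstacle.} Given $u_0$ continuous and bounded, put $u(t,x)=\int_{\mathbb R}\Gamma(t,x;t_0,z)u_0(z)\,dz$; the Gaussian bounds on $\Gamma$ and its derivatives justify differentiation under the integral, so $u$ solves \eqref{time-dependent Kolmogorov}, the first mass inequality gives $|u(t,x)|\le\|u_0\|_\infty$, and the approximate-identity property of $\Gamma(t,\cdot;t_0,\cdot)$ as $t\downarrow t_0$ gives $u(t,x)\to u_0(x_0)$, so $u$ is a bounded solution attaining the data. The genuinely technical point is that the coefficients are only locally H\"older continuous on all of $\mathbb R$, with no global bound, so the parametrix estimates (textbook for bounded coefficients) must be run on the strips and then made uniform in $R$: condition (1), $p\ge\lambda>0$, prevents degeneration, the inequality $\int\Gamma\,dz\le1$ is exactly the barrier that tames the $L^1$-type limits, and propagating the pointwise bound $c(t-s)^{-1/2}$ to $R\to\infty$ is where conditions (1)--(3) really earn their keep. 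Once $\Gamma$ is constructed, by contrast, positivity and the two mass bounds are soft consequences of the maximum principle, as above.
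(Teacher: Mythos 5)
The paper does not prove this lemma at all: it is quoted verbatim from Besala's 1975 paper \cite{Be} and used as a black box, so there is no in-paper argument to compare against. Your outline is the standard route and, as far as I can tell, the same one the cited source follows: parametrix construction on expanding bounded domains, monotone passage to the limit (legitimate here because $r\le 0$ lets the maximum principle order the Green's functions $\Gamma_R$ in $R$), and then the two mass inequalities from comparison with the constant supersolution $1$ for the equation and for its formal adjoint. Your adjoint computation is correct -- the zeroth-order coefficient of $L^*$ is exactly $p_{xx}-q_x+r$, which is where hypothesis (3) and the extra regularity of $p_x,p_{xx},q_x$ enter -- and the deduction $|u|\le\|u_0\|_\infty$ from $\int\Gamma\,dz\le1$ is right. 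The one honest caveat, which you yourself flag, is that the uniform-in-$R$ bound $\Gamma_R\le c(t-s)^{-1/2}$ with only locally H\"older, globally unbounded coefficients is the actual technical content of Besala's paper and is asserted rather than derived in your sketch; as a proof outline this is acceptable, but it is the step that cannot be waved through by citing "standard parametrix estimates," since those are proved for bounded coefficients.
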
\hfill{$\Box$}

Through the transformation
\begin{align}\label{transform}
	w(x,t)=e^{\frac12\int_{-\infty}^x\tilde{q}(s,t)ds}u\left(\int_{-\infty}^xp^{\frac12}(s,t)ds,t\right),
\end{align}
where 
\begin{align}\notag\label{tilde q}
	\tilde{q}(x,t)=&p^{-\frac12}(x,t)\left[q(x,t)-\frac12p^{-\frac12}p_x(x,t)\right.\\
		&\phantom{p^{-\frac12}(x,t)[}\left.+\frac12\int_{-\infty}^xp^{-\frac12}p_t(s,t)ds\right],
\end{align}
equation (\ref{time-dependent Kolmogorov}) can be simplified to the following FKE with the diffusion rate equals $1$ and without the convection term.
\begin{align}\label{kolmogorov}
         \left\{ \begin{aligned}
            w_t(x,t) &= w_{xx}(x,t)+V(x,t)w(x,t),\quad\textup{for}\ \mathbb{R}\times\mathbb{R_+}\\
            w(x,0)&= w_0(x),
         \end{aligned} \right.
    \end{align}
where
\begin{align}\notag\label{V}
    V(x,t)=&\left[-\frac14\tilde{q}^2(x,t)-\frac12\tilde{q}_x(x,t)\right.\\
	&+\left.\frac12\int_{-\infty}^x\tilde{q}_t(s,t)ds+r(x,t)\right].
\end{align}

\begin{remark}\label{even function advantage}
    From the computational point of view, the form (\ref{kolmogorov}) is superior to the original form (\ref{time-dependent Kolmogorov}) in general, when implementing with the HSM.\\
      \indent (i)\quad If both the potential $V(x,t)$ and the initial data $w(x,0)$ are even functions in
        $x$, so is the solution to (\ref{kolmogorov}).
        With the fact that the odd modes of the Fourier-Hermite coefficients of the even functions are identically zeros, it requires half amount of computations to resolve the even functions.\\
      \indent (ii)\quad Even when $V(x,t)$ and $w(x,0)$ are not even, it is still wise to get rid of the convection term, since this term will drive the states to left and right, and probably out of the current ``window". Shifting of the windows frequently by the moving-window technique will definitely affect the computational efficiency.
\end{remark}

\subsection{Formulation and convergence analysis}

In this subsection, we shall investigate the convergence rate of the HSM of solving the FKE. Let us consider the FKE (\ref{kolmogorov}) with some source term $F(x,t)$. The weak formulation of HSM is to find $u_N(x,t)\in\mathcal{R}_N$ such that
\begin{align}\label{weak formulation}
    \left\{ \begin{aligned}
       \langle\partial_tu_N(x,t),&\varphi\rangle\\
	=&-\langle\partial_xu_N(x,t),\varphi_x\rangle\\
	&+\langle V(x,t)u_N(x,t),\varphi\rangle
		+\langle F(x,t),\varphi\rangle,\\
        u_N(x,0) =&P_Nw_0(x),
    \end{aligned} \right.
\end{align}
for all  $\varphi\in\mathcal{R}_N$. The convergence rate is stated below:
\begin{theorem}\label{convergence}
    Assume
    \[
        -(1+|x|^2)^\gamma\lesssim V(x,t)\leq C,
    \]
    for all $(x,t)\in\mathbb{R}\times(0,T)$, for some
    $\gamma>0$ and some constant $C$. If $u_0\in
    W^r(\mathbb{R})$ and $u$ is the solution to (\ref{kolmogorov}) with source term $F(x,t)$, then for $u\in L^\infty(0,T;W^r(\mathbb{R}))\cap
    L^2(0,T;W^r(\mathbb{R}))$ with $r>2\gamma$ and
\begin{align*}
	N\gg &\max\left\{\alpha^{\frac{4\gamma-2r+2}{2\gamma-1}}\max{\{(\alpha\beta)^{4\gamma},1\}}^{\frac1{1-2\gamma}},\right.\\
	&\phantom{aaaaaa}\left.\alpha^{2-\frac r\gamma}\max{\{(\alpha\beta)^{4\gamma},1\}}^{-\frac1{2\gamma}}\right\},
\end{align*}
it yields that
    \begin{align}\label{convergence error}
        ||u-u_N||^2(t)\lesssim c^*\alpha^{-4\gamma-1}\max{\{(\alpha\beta)^{4\gamma},1\}}N^{2\gamma-r},
    \end{align}
    where $c^*$ depends only on $T$,
    $||u||_{L^\infty(0,T;W^r(\mathbb{R}))}$ and
    $||u||_{L^2(0,T;W^r(\mathbb{R}))}$.
\end{theorem}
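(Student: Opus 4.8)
The plan is a Galerkin energy estimate combined with the approximation bound of Theorem \ref{theorem-truncation error}. First I would split the error as $u-u_N=(u-P_Nu)+e_N$ with $e_N:=P_Nu-u_N\in\mathcal{R}_N$, so that $||u-u_N||^2\le 2|u-P_Nu|_0^2+2||e_N||^2$, and treat the two pieces separately. The projection term is handled directly by Theorem \ref{theorem-truncation error} (with $\mu=0,1$): $|u-P_Nu|_0^2\lesssim\alpha^{-1-2r}N^{-r}||u||_r^2$ and $|u-P_Nu|_1^2\lesssim\alpha^{1-2r}N^{1-r}||u||_r^2$; moreover, since $P_N$ keeps exactly the Hermite modes of index $\le N$, the remainder $u-P_Nu$ is supported on modes $>N$, so that $\sum_{k>N}\lambda_{k+1}^{s}\hat u_k^2\le\lambda_{N+2}^{\,s-r}||u||_r^2\sim(\alpha^2N)^{s-r}||u||_r^2$ for every $0\le s\le r$, a bound I will use below.

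For $e_N$: since $u$ solves \eqref{kolmogorov} with source $F$, it satisfies the weak identity of \eqref{weak formulation} against every $\varphi\in\mathcal{R}_N$; subtracting that identity from \eqref{weak formulation}, substituting $u-u_N=(u-P_Nu)+e_N$, and testing with $\varphi=e_N$, I would use the crucial fact that $P_N$ commutes with $\partial_t$ (the basis $\{H_n^{\alpha,\beta}\}$ is $t$-independent here), so $\partial_t(u-P_Nu)=(I-P_N)\partial_tu$ is $L^2$-orthogonal to $\mathcal{R}_N$ and $\langle\partial_t(u-P_Nu),e_N\rangle=0$. This gives
\begin{align*}
\tfrac12\tfrac{d}{dt}||e_N||^2+||\partial_x e_N||^2={}&-\langle\partial_x(u-P_Nu),\partial_x e_N\rangle\\
&+\langle V(u-P_Nu),e_N\rangle+\langle Ve_N,e_N\rangle.
\end{align*}
Now I bound the right-hand side: $\langle Ve_N,e_N\rangle\le C||e_N||^2$ by $V\le C$; the first term by Cauchy--Schwarz and Young's inequality, absorbing $\tfrac12||\partial_x e_N||^2$ on the left and leaving $\tfrac12|u-P_Nu|_1^2$; and, since $-(1+|x|^2)^\gamma\lesssim V\le C$ forces $|V|\lesssim(1+|x|^2)^\gamma$, the middle term by $|\langle V(u-P_Nu),e_N\rangle|\le\tfrac14||e_N||^2+C||(1+|x|^2)^\gamma(u-P_Nu)||^2$. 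After absorbing, Gr\"onwall's inequality together with $e_N(0)=P_Nu_0-P_Nu_0=0$ yields $||e_N||^2(t)\lesssim e^{CT}\int_0^T\!\big(|u-P_Nu|_1^2+||(1+|x|^2)^\gamma(u-P_Nu)||^2\big)\,ds$.

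The heart of the matter — and the step I expect to be the main obstacle — is the weighted projection estimate, i.e. bounding $||(1+|x|^2)^\gamma(u-P_Nu)||$ in terms of $||u||_r$, $\alpha$, $\beta$ and $N$, since this is what produces the precise prefactor in \eqref{convergence error}. I would first reduce to the centered weight, comparing $(1+|x|^2)^\gamma$ with $(1+\alpha^2(x-\beta)^2)^\gamma$ at the cost of a factor of the type $\max\{(\alpha\beta)^{4\gamma},1\}$, and then exploit the three-term recurrence \eqref{recurrence for RT hermite function}: multiplication by $\alpha(x-\beta)$ acts on the Fourier--Hermite coefficients as the tridiagonal operator with entries $\sim\sqrt{\lambda_n}/\alpha$, so on a function supported on modes $>N$ one has, for integer $m$, $||(1+\alpha^2(x-\beta)^2)^m v||^2\lesssim\alpha^{-4m-1}\sum_k\lambda_{k+1}^{2m}\hat v_k^2$; non-integer $\gamma$ is then reached by interpolating between $m=\lfloor\gamma\rfloor$ and $m=\lceil\gamma\rceil$. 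Combining this with the mode-$>N$ decay $\sum_{k>N}\lambda_{k+1}^{2\gamma}\hat u_k^2\lesssim(\alpha^2N)^{2\gamma-r}||u||_r^2$ gives $||(1+|x|^2)^\gamma(u-P_Nu)||^2\lesssim\alpha^{-4\gamma-1}\max\{(\alpha\beta)^{4\gamma},1\}N^{2\gamma-r}||u||_r^2$, hence also its time integral against $||u||_{L^2(0,T;W^r)}^2$. The delicate points are tracking the sharp powers of $\alpha$ and $\beta$ through a non-polynomial weight and a translation, and keeping the loss in $N$ at exactly $N^{2\gamma-r}$.

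Assembling the three contributions gives
\begin{align*}
||u-u_N||^2(t)\lesssim c^*\big(&\alpha^{-1-2r}N^{-r}+\alpha^{1-2r}N^{1-r}\\
&+\alpha^{-4\gamma-1}\max\{(\alpha\beta)^{4\gamma},1\}N^{2\gamma-r}\big),
\end{align*}
with $c^*$ depending only on $T$, $||u||_{L^\infty(0,T;W^r)}$ (used for the pointwise-in-$t$ projection error) and $||u||_{L^2(0,T;W^r)}$ (used for the Gr\"onwall forcing). Finally, the two lower bounds imposed on $N$ are precisely the conditions that the first and second terms be dominated by the third: solving $\alpha^{-1-2r}N^{-r}\lesssim\alpha^{-4\gamma-1}\max\{(\alpha\beta)^{4\gamma},1\}N^{2\gamma-r}$ and $\alpha^{1-2r}N^{1-r}\lesssim\alpha^{-4\gamma-1}\max\{(\alpha\beta)^{4\gamma},1\}N^{2\gamma-r}$ for $N$ reproduces the exponents $2-\tfrac r\gamma$ and $\tfrac{4\gamma-2r+2}{2\gamma-1}$, so under the stated hypotheses the estimate collapses to \eqref{convergence error}.
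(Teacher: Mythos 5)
Your proposal is correct and follows essentially the same route as the paper: the same decomposition $u-u_N=(u-P_Nu)+(P_Nu-u_N)$, the same Galerkin energy estimate with Young/Cauchy--Schwarz and Gr\"onwall, and the same reduction of the crucial weighted term $\|V(u-P_Nu)\|$ to a weighted projection bound proved via the three-term recurrence (this is exactly the paper's Lemma \ref{lemma-seminorm estimate}), with the two lower bounds on $N$ arising, as you say, from forcing the $N^{-r}$ and $N^{1-r}$ terms to be dominated by $N^{2\gamma-r}$. Your verification of the exponents and the identification of the weighted estimate as the heart of the argument match the paper's proof.
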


Before we prove Theorem \ref{convergence}, we need some estimate on $||x^{r_1}\partial_x^{r_2}u(x)||^2$, for any integer $r_1,r_2\geq0$:
\begin{lemma}\label{lemma-seminorm estimate}
    For any function $u\in W^{r_1+r_2}(\mathbb{R})$, with some integer $r_1,r_2\geq0$, we have
    \begin{align}
        ||x^{r_1}\partial_x^{r_2}u||^2\lesssim\alpha^{-2r_1-1}\max{\{(\alpha\beta)^{2r_1},1\}}||u||_{r_1+r_2}^2.
    \end{align}
\end{lemma}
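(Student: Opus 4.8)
\noindent\emph{Proof strategy.} I would work entirely on the Fourier--Hermite side and track how $\partial_x$ and multiplication by $x$ act on the basis $\{H_n^{\alpha,\beta}\}$, using only the three-term recurrence \eqref{recurrence for RT hermite function} and the derivative identity \eqref{derivative}. Write $u=\sum_{n\ge 0}\hat u_n H_n^{\alpha,\beta}$. First I would iterate \eqref{derivative} $r_2$ times: since $\partial_x$ sends $H_n^{\alpha,\beta}$ to a combination of $H_{n-1}^{\alpha,\beta}$ and $H_{n+1}^{\alpha,\beta}$ with coefficients of size $\lesssim\alpha\sqrt{n+1}$, after $r_2$ steps one obtains $\partial_x^{r_2}u=\sum_{m\ge 0}g_m H_m^{\alpha,\beta}$ with
\[
  |g_m|\ \lesssim\ \alpha^{r_2}(m+1)^{r_2/2}\!\!\sum_{|k-m|\le r_2}\!\!|\hat u_k| ,
\]
only the finitely many $\hat u_k$ with $|k-m|\le r_2$ entering each $g_m$.

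To incorporate the $x^{r_1}$ factor I would write $x=(x-\beta)+\beta$ and expand binomially, $x^{r_1}=\sum_{j=0}^{r_1}\binom{r_1}{j}\beta^{\,r_1-j}(x-\beta)^{j}$, since \eqref{recurrence for RT hermite function} is the recurrence for multiplication by $(x-\beta)$. Iterating \eqref{recurrence for RT hermite function}, each factor $(x-\beta)$ contributes a coefficient $\lesssim\alpha^{-1}\sqrt{n+1}$ together with an index shift by $\pm1$, while each factor $\beta$ is a pure multiplier. Summing the binomial series and using that every index occurring differs from $m$ by at most $r_1+r_2$ (hence is comparable to $m+1$ up to a fixed constant), the Fourier--Hermite coefficients $d_m$ of $x^{r_1}\partial_x^{r_2}u$ satisfy
\[
  |d_m|\ \lesssim\ \alpha^{-r_1}\bigl(\alpha|\beta|+\sqrt{m+1}\,\bigr)^{r_1}\alpha^{r_2}(m+1)^{r_2/2}\!\!\sum_{|k-m|\le r_1+r_2}\!\!|\hat u_k| .
\]
Here the elementary inequalities $\bigl(\alpha|\beta|+\sqrt{m+1}\,\bigr)^{2r_1}\le 4^{r_1}\max\{(\alpha\beta)^{2r_1},(m+1)^{r_1}\}$ and $\max\{(\alpha\beta)^{2r_1},(m+1)^{r_1}\}\le\max\{(\alpha\beta)^{2r_1},1\}\,(m+1)^{r_1}$ (valid because $m+1\ge 1$) are exactly what produce the factor $\max\{(\alpha\beta)^{2r_1},1\}$ in the statement.

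It then remains to assemble the estimate. Applying Cauchy--Schwarz to the bounded-length inner sums and interchanging summations yields
\[
  \sum_m d_m^2\ \lesssim\ \alpha^{2(r_2-r_1)}\max\{(\alpha\beta)^{2r_1},1\}\sum_{k\ge 0}(k+1)^{r_1+r_2}\hat u_k^2 .
\]
By orthogonality \eqref{orthogonal}, $||v||^2=\tfrac{\sqrt{\pi}}{\alpha}\sum_n\hat v_n^2$ for any $v\in L^2(\mathbb{R})$, so $||x^{r_1}\partial_x^{r_2}u||^2=\tfrac{\sqrt{\pi}}{\alpha}\sum_m d_m^2$; and since $\lambda_{k+1}=2\alpha^2(k+1)$ by \eqref{S-L}, the definition \eqref{Hgamma} gives $\sum_k(k+1)^{r_1+r_2}\hat u_k^2=(2\alpha^2)^{-(r_1+r_2)}||u||_{r_1+r_2}^2$. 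Collecting the powers of $\alpha$ then produces a bound of the asserted shape $||x^{r_1}\partial_x^{r_2}u||^2\lesssim\alpha^{-c}\max\{(\alpha\beta)^{2r_1},1\}\,||u||_{r_1+r_2}^2$ with $c=2r_1+1$ (the direct count in fact delivers the sharper exponent $c=4r_1+1$, consistent with the stated bound whenever $\alpha$ stays bounded below). Finally, since $x^{r_1}\partial_x^{r_2}u$ is not obviously in $L^2(\mathbb{R})$ beforehand, I would first run the whole computation with $u$ replaced by $P_N u\in\mathcal{R}_N$, for which every expression above is a finite Hermite sum and every step is an identity or a finite-sum estimate; the uniform bound just obtained, together with $P_N u\to u$ in $W^{r_1+r_2}(\mathbb{R})$, then shows that $\{x^{r_1}\partial_x^{r_2}P_N u\}_N$ is Cauchy in $L^2(\mathbb{R})$, identifies the limit with $x^{r_1}\partial_x^{r_2}u$, and transfers the inequality.

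The only substantive work, and the part I expect to require the most care, is the second step: tracking the powers of $\alpha$, $\beta$ and $n$ through the iterated recurrences \eqref{recurrence for RT hermite function}--\eqref{derivative}, in particular checking that the index shifts (always $\le r_1+r_2$) never spoil the comparability of the occurring indices with $m+1$, so that the factor $\max\{(\alpha\beta)^{2r_1},1\}$ comes out uniformly in $m$ and the constants stay independent of $\alpha$, $\beta$ and $N$. The remaining ingredients — Cauchy--Schwarz, the Parseval identity from \eqref{orthogonal}, and the conversion to the $W^{r_1+r_2}$-norm via \eqref{Hgamma} — are routine.
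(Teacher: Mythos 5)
Your approach is essentially the paper's own: expand $u$ in the basis $\{H_n^{\alpha,\beta}\}$, push $x^{r_1}\partial_x^{r_2}$ through term by term via \eqref{recurrence for RT hermite function} and \eqref{derivative}, and sum by orthogonality. Your binomial expansion of $x^{r_1}=((x-\beta)+\beta)^{r_1}$ plays exactly the role of the paper's case split at the index $n^*$ with $\alpha^2\beta\sim\sqrt{\lambda_{n^*+1}}$, and your closing density argument (working with $P_Nu$ first) is a welcome piece of rigor the paper omits. The one substantive point is the exponent of $\alpha$: your count gives $\alpha^{-4r_1-1}\max\{(\alpha\beta)^{2r_1},1\}$, and you should trust it rather than describe it as a ``sharper'' form of the stated bound. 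Each factor of $x$ costs $\sim\alpha^{-1}\sqrt{n+1}\sim\alpha^{-2}\sqrt{\lambda_{n+1}}$, so after squaring and converting $(n+1)^{r_1}$ into $\lambda_{n+1}^{r_1}$ one necessarily accumulates $\alpha^{-4r_1}$, not $\alpha^{-2r_1}$; the test case $u=H_1^{\alpha,0}$ with $r_1=1$, $r_2=0$, $\beta=0$ gives $||xu||^2=\tfrac{3\sqrt{\pi}}{8}\,\alpha^{-5}||u||_1^2$ exactly, so the stated $\alpha^{-2r_1-1}$ cannot hold with a constant independent of $\alpha$ once $\alpha<1$. This is a defect of the source, not of your argument: the paper's own proof makes the same silent substitution when its bound $\alpha^{-4r_1-1}\sum_{n>n^*}\lambda_{n+1}^{r_1+r_2}\hat{u}_n^2$ is written as $\alpha^{-2r_1-1}\sum_{n>n^*}\lambda_{n+1}^{r_1+r_2}\hat{u}_n^2$. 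Your bound implies the stated one precisely when $\alpha\geq1$ (``bounded below'' is not enough if, as the paper stipulates, the implicit constants must be independent of $\alpha$), which happens to cover every $\alpha$ used in the paper's experiments; you should either state the lemma with the exponent you actually obtain or add the hypothesis $\alpha\geq1$. Everything else --- the bandwidth-$(r_1+r_2)$ bookkeeping, the inequality $(\alpha|\beta|+\sqrt{m+1})^{2r_1}\lesssim\max\{(\alpha\beta)^{2r_1},1\}(m+1)^{r_1}$, Cauchy--Schwarz on the short sums, and the Parseval conversion --- is correct.
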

\begin{proof}
For any integer $r_1,r_2\geq0$,
\begin{align*}
    ||x^{r_1}\partial_x^{r_2}u||^2
    =&\left|\left|\sum_{n=0}^\infty\hat{u}_nx^{r_1}\partial_x^{r_2}H_n^{\alpha,\beta}(x)\right|\right|^2\\
        \sim&\left|\left|\frac1{\alpha^{2r_1}}\sum_{n=0}^\infty\hat{u}_n\sum_{k=-r_2-r_1}^{r_2+r_1}
    a_{n,k}H_{n+k}^{\alpha,\beta}(x)\right|\right|^2,
\end{align*}
by (\ref{recurrence for RT hermite function}) and
(\ref{derivative}), where for each $n$ fixed, $a_{n,k}$ is a product
of $2(r_1+r_2)$ factors of $\alpha^2\beta$ or
$\sqrt{\lambda_{n+j}}$, with $-r_2-r_1\leq j\leq r_2+r_1$. Let $n^*\geq0$ such that $\alpha^2\beta\sim\sqrt{\lambda_{n^*+1}}$. And notice that $\lambda_{n+j}\sim\lambda_{n+1}$ for $n+j\geq0$ and
$H_{n+j}^{\alpha,\beta}(x)\equiv0$ for $n+j<0$. Hence, we have
\begin{align*}
    ||x^{r_1}\partial_x^{r_2}u(x)||^2\lesssim&\alpha^{-1}\beta^{2r_1}\sum_{n=0}^{n^*}\lambda_{n+1}^{r_2+r_1}\hat{u}_n^2\\
	&+\alpha^{-2r_1-1}\sum_{n=n^*+1}^\infty\lambda_{n+1}^{r_2+r_1}\hat{u}_n^2\\
       \leq&\alpha^{-2r_1-1}\max{\{(\alpha\beta)^{2r_1},1\}}||u||_{r_1+r_2}^2,
\end{align*}
for any integer $r_1,r_2\geq0$, by
(\ref{orthogonal}).
\end{proof}

\begin{proof}[Proof of Theorem \ref{convergence}]
Denote $U_N=P_Nu$ for simplicity. By
(\ref{kolmogorov}) with source term $F(x,t)$ and the definition of $U_N$, we
obtain that
\begin{align}\notag
        0=\langle\partial_t(u-U_N),\varphi\rangle	=&-\langle u_x,\varphi_x\rangle+\langle V(x,t)u,\varphi\rangle\\\notag
		&+\langle F(x,t),\varphi\rangle-\langle\partial_tU_N,\varphi\rangle\\\notag\label{u-U_N}
    \Rightarrow\quad\langle\partial_tU_N,\varphi\rangle=&-\langle u_x,\varphi_x\rangle
	+\langle V(x,t)u,\varphi\rangle\\
	&+\langle F(x,t),\varphi\rangle,
\end{align}
for all $\varphi\in\mathcal{R}_N$. Combine with (\ref{weak
formulation}), it yields that
\begin{align*}
    \langle\partial_t(u_N-U_N),\varphi\rangle=&-\langle\partial_x(u_N-u),\varphi_x\rangle\\
	&+\langle V(x,t)(u_N-u),\varphi\rangle,
\end{align*}
for all $\varphi\in\mathcal{R}_N$. Set $\varrho_N=u_N-U_N$. Choose
the function $\varphi=2\varrho_N$, then we have
\begin{align}\notag
    \partial_t||\varrho_N||^2=&-2||\partial_x\varrho_N||^2-2\langle \partial_x(U_N-u),\partial_x\varrho_N\rangle\\\notag\label{estimate on varrho_N}
	&+2\langle V(x,t)\varrho_N,\varrho_N\rangle\\
	&+2\langle V(x,t)(U_N-u),\varrho_N\rangle.
\end{align}
By Young's inequality,
\begin{align}\notag\label{term II}
    |\langle\partial_x(U_N-u),&\partial_x\varrho_N\rangle|\\
	&\leq\frac14||\partial_x(U_N-u)||^2+||\partial_x\varrho_N||^2.
\end{align}
The assumption $V(x,t)\leq C$ for $(x,t)\in\mathbb{R}\times(0,T)$
yields that
\begin{align}\label{term III}
    \langle V(x,t)\varrho_N,\varrho_N\rangle\leq C||\varrho_N||^2,
\end{align}
for $(x,t)\in\mathbb{R}\times(0,T)$. Moreover, we have
\begin{align}\notag\label{term IV}
    |\langle V(x,t)(U_N-u),&\varrho_N\rangle|\\
	&\leq\frac12||V(U_N-u)||^2+\frac12||\varrho_N||^2,
\end{align}
by Cauchy-Schwartz's inequality. Substitute (\ref{term
II})-(\ref{term IV}) into (\ref{estimate on varrho_N}), we obtain
that
\begin{align}\notag\label{esitmate of varrho_N}
    \partial_t||\varrho_N||^2&-\left(C+1\right)||\varrho_N||^2\\
        \leq&||V(U_N-u)||^2+\frac12||\partial_x(U_N-u)||^2.
\end{align}
Notice that $V\gtrsim-(1+|x|^2)^\gamma$, for some $\gamma>0$.
Essentially by the estimate in Lemma \ref{lemma-seminorm estimate},
we can estimate
\begin{align}\notag
    ||V&(U_N-u)||^2\\\notag
\lesssim&||(1+|x|^2)^\gamma(U_N-u)||^2
        \lesssim||(x^{2\gamma}+1)(U_N-u)||^2\\\notag
        \lesssim&\alpha^{-4\gamma-1}\max{\{(\alpha\beta)^{4\gamma},1\}}\sum_{n=N+1}^\infty\lambda_{n+1}^{2\gamma}\hat{u}^2_n\\\notag
	&+||U_N-u||^2\\\notag
        \lesssim&\alpha^{-4\gamma-1}\max{\{(\alpha\beta)^{4\gamma},1\}}N^{2\gamma-r}||u||_r^2\\\label{term V}
	&+\alpha^{-2r-1}N^{-r}||u||_r^2.
\end{align}
The estimate of the second term on the right hand side of (\ref{term
V}) is followed by Theorem \ref{theorem-truncation error}. Again, by
Theorem \ref{theorem-truncation error}, we obtain
\begin{align}\label{term VI}
    ||\partial_x(U_N-u)||^2=|U_N-u|_1^2\lesssim\alpha^{-2r+1}N^{1-r}||u||_r^2.
\end{align}
Substitute (\ref{term V}), (\ref{term VI}) into (\ref{esitmate of
varrho_N}), we obtain
\begin{align*}
    \partial_t||\varrho_N||^2&-(C+1)||\varrho_N||^2\\
	&\lesssim\alpha^{-4\gamma-1}\max{\{(\alpha\beta)^{4\gamma},1\}}N^{2\gamma-r}||u||_r^2,
\end{align*}
provided that 
\begin{align*}
	N\gg &\max\left\{\alpha^{\frac{4\gamma-2r+2}{2\gamma-1}}\max{\{(\alpha\beta)^{4\gamma},1\}}^{\frac1{1-2\gamma}},\right.\\
	&\phantom{aaaaaa}\left.\alpha^{2-\frac r\gamma}\max{\{(\alpha\beta)^{4\gamma},1\}}^{-\frac1{2\gamma}}\right\}.
\end{align*}
Therefore, it yields that
\begin{align*}
    ||\varrho_N||^2(t)\lesssim&\alpha^{-4\gamma-1}\max{\{(\alpha\beta)^{4\gamma},1\}}N^{2\gamma-r}\\
	&\phantom{aa}\cdot\int_0^te^{-(C+1)(t-s)}||u||^2_r(s)ds.
\end{align*}
By the triangular inequality and Theorem \ref{theorem-truncation
error},
\begin{align*}
    ||u&-u_N||^2(t)\\
	&\leq||\varrho_N||^2+||u-U_N||^2\\
	&\lesssim\alpha^{-4\gamma-1}N^{2\gamma-r}
        \left[||u||_r^2\right.\\
	&\phantom{aa}\left.+\max{\{(\alpha\beta)^{4\gamma},1\}}\int_0^te^{-(C+1)(t-s)}||u||_r^2(s)ds\right]\\
            &\lesssim c^*\alpha^{-4\gamma-1}\max{\{(\alpha\beta)^{4\gamma},1\}}N^{2\gamma-r},
\end{align*}
where $c^*$ is a constant depending on
$||u||_{L^\infty(0,T;W^r(\mathbb{R}))}$, $||u||_{L^2(0,T;W^r(\mathbb{R}))}$ and $T$.
\end{proof}

\subsection{Numerical verification of the convergence rate}

To verify the convergence rate of HSM, we explore an 1D FKE
with some source $F(x,t)$. The exact solution could be found
explicitly as our benchmark. The $L^2$ error v.s. the truncation mode $N$ is plotted.

We consider the 1D FKE
\begin{equation}\label{example}
   \left\{ \begin{aligned}
        u_t&=u_{xx}-x^2u+(\sin t+\cos
t+3x)e^{-\frac12x^2}\\
        u(x,0)&=xe^{-\frac12x^2},
\end{aligned} \right.
\end{equation}
for $(x,t)\in\mathbb{R}\times[0,T]$. It is easy to verify that
$u(x,t)=(x+\sin{t})e^{-\frac12x^2}$ is the exact solution.

Notice that the initial data, the potential and the source in \eqref{example} are all
concentrated around the origin. So, we set $\beta=0$. For
notational convenience, we drop $\beta$ in this example.  As to the
suitable scaling factor $\alpha$, from our strategy in section II.B, we know that it is better to let $\alpha=1$. However, if we do so, the first two modes will give us extremely good approximation. Hence, the error v.s. the truncation mode won't be observable. Due to this consideration, we pick
$\alpha=1.4$ (a little bit away from $1$, but not too far away so that it won't affect the resolution too much). The formulation (\ref{weak formulation}) yields
\begin{align}\notag\label{formulation linear}
    \langle\partial_tu_N,\varphi\rangle=&-\langle\partial_xu_N,\partial_x\varphi\rangle-\langle xu_N,x\varphi\rangle\\
	&+\langle F(x,t),\varphi\rangle,
\end{align}
for all $\varphi\in\mathcal{R}_N$. Take the test functions
$\varphi=H_n^\alpha(x)$, $n=0,1,\cdots,N$, in (\ref{formulation linear}). The numerical solution $u_N\in\mathcal{R}_N$ can be written in the form
\begin{align*}
    u_N(x,t)=\sum_{n=0}^Na_n(t)H_n^{\alpha}(x).
\end{align*}
The matrix form of (\ref{formulation linear}) follows from (\ref{recurrence for RT hermite function}) and (\ref{orthogonality of derivative}):
\begin{align}\label{example_ODE}
    \partial_t\vec{a}(t)=A\vec{a}(t)+\vec{f}(t),
\end{align}
where $\vec{a}(t)=(a_0(t), a_1(t),\cdots,a_{N}(t))^T$,
$\vec{f}(t)=\left(\hat{f}_0(t),\hat{f}_1(t),\cdots,\hat{f}_N(t)\right)^T$ are
column vectors with $N+1$ entries, $\hat{f}_i(t)$, $i=0,1,\cdots,N$, are the
Fourier-Hermite coefficients of $F(x,t)$ and $A$ is a penta-diagonal 
$(N+1)\times(N+1)$ constant matrix, where $A=-A_1-A_2$,
\begin{equation*}
    A_1(i,j)=\left\{ \begin{aligned}
        &-\frac{\alpha^2}2\sqrt{(k+1)(k+2)},\\
		&\phantom{-\frac{\alpha^2}2}\ k=\min{\{i,j\}},\quad|i-j|=2,\\
        &\alpha^2\left(i+\frac12\right),\quad i=j,\\
        &0,\quad\textup{otherwise},
\end{aligned} \right.
\end{equation*}
and
\begin{equation*}
  A_2(i,j)=\left\{ \begin{aligned}
        &\frac{\sqrt{(k+1)(k+2)}}{2\alpha^2},\\
		&\phantom{-\frac{\alpha^2}2}\ k=\min{\{i,j\}},\quad|i-j|=2,\\
        &\frac{(2i+1)}{2\alpha^2},\quad i=j,\\
        &0,\quad\textup{otherwise}.
\end{aligned} \right.
\end{equation*}

\begin{figure}[!t]
          \centering
             \includegraphics[trim = 30mm 85mm 30mm 85mm, clip, scale=0.6]{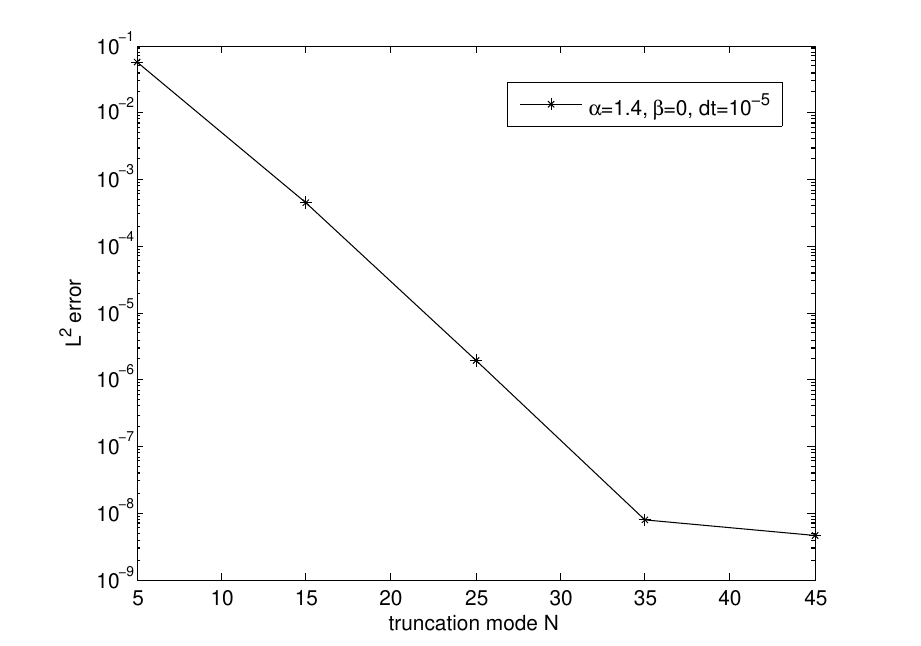}
          \caption{The $L^2$-errors of the HSM to FKE \eqref{example} v.s. the truncation mode $N=5,15,25,35$ and $45$ is plotted, with $\alpha=1.4$, $\beta=0$ and the time step $dt=10^{-5}$.}
          \label{fig-L2error}
\end{figure}

The $L^2$ errors v.s. the truncation mode $N$ at time $T=0.1$ is plotted in Fig. \ref{fig-L2error}. The ODE \eqref{example_ODE} is numerically solved by central difference scheme in time with the time step $dt=10^{-5}$. It indeed shows the spectral accuracy of HSM.

\section{Application to nonlinear filtering problems}

Recall the brief description of our algorithm in the introduction (and more details in appendix A), the off-line computation is to numerically solve the FKE \eqref{Kolmogorov equation} repeatedly on each interval $[\tau_i,\tau_{i+1}]$. Equation (\ref{Kolmogorov equation}) is in the form of (\ref{time-dependent Kolmogorov}) with
\begin{align*}
    p(x,t) &= \frac12Qg^2;\quad
    q(x,t) = Q(g^2)_x-f_x;\\
    r(x,t) &= -\frac12h^2/S+Q(g_x^2+gg_{xx})-f_x,
\end{align*}
where $Q$, $S$, $f$, $g$ and $h$ are in (\ref{Ito SDE}).

\subsection{Existence and Uniqueness of the solution to the FKE}

In this subsection, we interpret the well-posedness theorem (Lemma \ref{existence}) for general 1D FKE in section III in the framework of the NLF problems.
\begin{proposition}[Existence]\label{existence prop}
Let $f$, $g$, $h$ in (\ref{Ito SDE}) are H\"older continuous
functions in $\mathcal{D}:=\mathbb{R}\times(t_0,t_1)$. $g_x$,
$g_{xx}$ and $f_x$ exist and are also H\"older continuous in
$\mathcal{D}$. Assume further that
\begin{enumerate}
    \item $Qg^2\geq\lambda>0$, for some $\lambda>0$;
    \item $S>0$;
    \item $-\frac12h^2/S-f_x+Q(g_x^2+gg_{xx})\leq C$, for some constant
    $C$,
\end{enumerate}
for $(x,t)\in\mathcal{D}$. Then there exists a bounded solution
$u(x,t)$ to (\ref{time-dependent Kolmogorov}), if the initial
condition $u_0(x)$ is continuous and bounded.
\end{proposition}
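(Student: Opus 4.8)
The plan is to reduce Proposition \ref{existence prop} directly to Lemma \ref{existence} (Besala's theorem) by verifying that the coefficients coming from the NLF problem satisfy hypotheses (1)--(3) of that lemma. Recall that equation \eqref{Kolmogorov equation} has been written in the form \eqref{time-dependent Kolmogorov} with $p = \frac12 Qg^2$, $q = Q(g^2)_x - f_x$, and $r = -\frac12 h^2/S + Q(g_x^2 + gg_{xx}) - f_x$. First I would check the regularity requirements: the H\"older continuity of $p$, $q$, $r$ and of $p_x$, $p_{xx}$, $q_x$ follows from the assumed H\"older continuity of $f$, $g$, $h$, $g_x$, $g_{xx}$, $f_x$ (together with the stated existence of these derivatives), since $p$, $q$, $r$ are algebraic combinations of these functions and their derivatives, and products/sums of locally H\"older continuous functions are locally H\"older continuous. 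One must note that $p_x = \frac12 Q (g^2)_x = Q g g_x$ and $p_{xx} = Q(g_x^2 + g g_{xx})$ are built from exactly the quantities assumed to be H\"older continuous; likewise $q_x = Q(g^2)_{xx} - f_{xx}$ — here I would need $q_x$ to exist and be H\"older, which is implicit in requiring \eqref{time-dependent Kolmogorov} to fall under Lemma \ref{existence}; I'd make this explicit or fold it into the hypotheses.

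Next I would verify the three structural inequalities. Condition (1) of Lemma \ref{existence}, namely $p(t,x) \geq \lambda > 0$, is immediate from hypothesis (1) of the Proposition since $p = \frac12 Qg^2 \geq \frac12 \lambda$. For condition (2), $r(t,x) \leq 0$: here I would use that $S > 0$ (hypothesis (2)) so $-\frac12 h^2/S \leq 0$, and hypothesis (3) gives $-\frac12 h^2/S - f_x + Q(g_x^2 + gg_{xx}) \leq C$; strictly this yields $r \leq C$ rather than $r \leq 0$. This is the one genuine gap: Besala's lemma wants $r \leq 0$, but the NLF potential is only bounded above. The standard fix is to absorb the constant $C$ by the substitution $\tilde u(x,t) = e^{-Ct} u(x,t)$, which transforms \eqref{time-dependent Kolmogorov} into the same equation with $r$ replaced by $r - C \leq 0$; a bounded solution $\tilde u$ then gives a bounded solution $u = e^{Ct}\tilde u$ on the finite interval $(t_0,t_1)$. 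I would insert this reduction explicitly. For condition (3), $(r - q_x + p_{xx})(t,x) \leq 0$: I would compute $r - q_x + p_{xx}$ in terms of $f$, $g$, $h$, $Q$, $S$ and observe the cancellation. We have $p_{xx} = Q(g_x^2 + g g_{xx})$ and $q_x = Q(g^2)_{xx} - f_{xx} = 2Q(g_x^2 + g g_{xx}) - f_{xx}$, so $p_{xx} - q_x = -Q(g_x^2 + g g_{xx}) + f_{xx}$, and hence $r - q_x + p_{xx} = -\frac12 h^2/S - f_x + f_{xx} + [Q(g_x^2+gg_{xx}) - Q(g_x^2+gg_{xx})] = -\frac12 h^2/S - f_x + f_{xx}$. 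Hmm — this is not obviously $\leq 0$ under the stated hypotheses, so either an additional assumption on $f_{xx}$ is needed, or (more likely, matching the after-the-$Ct$-shift viewpoint) the intended reading is that this quantity, like $r$, is only bounded above and the same $e^{-Ct}$ device handles it. I would state the precise form of hypothesis (3) that makes both $r$ and $r - q_x + p_{xx}$ bounded above by $C$, apply the exponential shift once, and then invoke Lemma \ref{existence}.

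Finally, with conditions (1)--(3) verified for the shifted equation and $u_0$ continuous and bounded, Lemma \ref{existence} produces the fundamental solution $\Gamma$ with $0 \leq \Gamma \leq c(t-s)^{-1/2}$, $\int \Gamma\, dz \leq 1$, and the representation $\tilde u(t,x) = \int_{-\infty}^\infty \Gamma(t,x;t_0,z) u_0(z)\, dz$, which is a bounded solution of the shifted problem; undoing the shift gives the bounded solution $u = e^{C(t-t_0)}\tilde u$ of \eqref{time-dependent Kolmogorov} on $\mathcal D$, with $\|u\|_\infty \leq e^{C(t_1-t_0)} \|u_0\|_\infty$. The main obstacle is purely bookkeeping: Besala's hypotheses are phrased with sign conditions ($r \leq 0$ and $r - q_x + p_{xx} \leq 0$) that the NLF coefficients meet only up to an additive constant, so the proof must spend its effort on (a) the explicit algebra showing $r - q_x + p_{xx}$ collapses to a controllable expression, and (b) the exponential-in-time gauge transformation that converts "bounded above by $C$" into "$\leq 0$." Neither step is deep, but getting the cancellation in (a) right and stating hypothesis (3) in the form that actually covers both required inequalities is where care is needed.
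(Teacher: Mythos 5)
Your overall route is exactly the paper's: check Besala's hypotheses for the NLF coefficients, observe that they hold directly when $C\leq 0$, and for $C>0$ absorb the constant with the gauge $v=e^{-C(t-t_0)}u$, which replaces $r$ by $r-C\leq 0$ and preserves boundedness on the finite interval. That part of your proposal matches the published proof essentially line for line.

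The one place you go astray is the verification of Besala's condition (3), $r-q_x+p_{xx}\leq 0$, where you end up proposing to strengthen the hypotheses (a bound involving $f_{xx}$) instead of proving the proposition as stated. The source of the trouble is the first-order coefficient: the operator is $Lu-\tfrac12 h^2u/S=\tfrac12(Qg^2u)_{xx}-(fu)_x-\tfrac12 h^2u/S$, whose expansion gives $q=Q(g^2)_x-f$, not $Q(g^2)_x-f_x$ as printed in the paper (the displayed $q$ is a typo; note $f_{xx}$ is nowhere assumed to exist, so the printed $q$ cannot be the intended one). With the correct $q$ one has $q_x=2Q(g_x^2+gg_{xx})-f_x$ and $p_{xx}=Q(g_x^2+gg_{xx})$, so
\begin{align*}
r-q_x+p_{xx}=-\tfrac12 h^2/S+(1-2+1)\,Q(g_x^2+gg_{xx})+(-f_x+f_x)=-\tfrac12 h^2/S\leq 0,
\end{align*}
identically under hypothesis (2), with no condition on $f_{xx}$; this is just the statement that the formal adjoint of the Fokker--Planck operator annihilates constants up to the dissipative term $-\tfrac12 h^2/S$. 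The exponential shift does not disturb this, since $(r-C)-q_x+p_{xx}=-\tfrac12 h^2/S-C\leq 0$ for $C>0$. Once you make this correction, your hedged ``state the precise form of hypothesis (3)'' step disappears and your argument closes exactly as the paper's does.
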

\begin{IEEEproof}
Conditions 1)-3) in Lemma
\ref{existence} are directly translated into conditions 1)-3) in
this proposition with $C\leq0$. For $C>0$, let $v(x,t)=
e^{-C(t-t_0)}u(x,t)$, then $v$ satisfies
\begin{align}\notag\label{v-equation}
       v_t(x,t) =& p(x,t)v_{xx}(x,t)+q(x,t)v_x(x,t)\\
	&+(r(x,t)-C)v(x,t),
\end{align}
for $(x,t)\in\mathcal{D}$, with the initial condition
$v(x,t_0)=u_0(x)$. The coefficients of (\ref{v-equation}) satisfy
the conditions in Lemma \ref{existence}. Thus, we apply Lemma
\ref{existence} directly to (\ref{v-equation}). The existence of the
solution to (\ref{time-dependent Kolmogorov}) follows
immediately.
\end{IEEEproof}
\begin{remark}
    In practice, the initial data of the conditional density function
     either hascompact support or decays exponentially as $|x|\rightarrow+\infty$.
    So, the assumption on the initial data in Proposition \ref{existence
    prop} holds.
\end{remark}

For concise of notations, we give the uniqueness for the equation (\ref{kolmogorov}), instead of (\ref{time-dependent Kolmogorov}). It can be easily transformed into each other, due to the bijective transformation (\ref{transform}).

\begin{proposition}[Uniqueness]\label{uniqueness}
    There exists a unique solution to (\ref{kolmogorov}) in the class that \linebreak $\{u:\,\lim_{|x|\rightarrow\infty}uu_x=0\}$
    if $V(x,t)$ is bounded from above in $\mathcal{D}$.
\end{proposition}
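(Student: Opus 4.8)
The plan is to use the standard energy method for parabolic equations, working directly with equation (\ref{kolmogorov}) so that no convection term is present. Suppose $w_1$ and $w_2$ are two solutions to (\ref{kolmogorov}) lying in the class $\{u:\,\lim_{|x|\rightarrow\infty}uu_x=0\}$ with the same initial data $w_0$, and set $\phi=w_1-w_2$. Then $\phi$ satisfies the homogeneous equation $\phi_t=\phi_{xx}+V(x,t)\phi$ with $\phi(x,0)=0$. First I would multiply this equation by $\phi$ and integrate over $\mathbb{R}$, obtaining
\begin{align*}
    \frac12\frac{d}{dt}\int_{\mathbb{R}}\phi^2\,dx=\int_{\mathbb{R}}\phi\phi_{xx}\,dx+\int_{\mathbb{R}}V(x,t)\phi^2\,dx.
\end{align*}

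Next I would integrate the first term on the right by parts. This is exactly where the class restriction $\lim_{|x|\to\infty}uu_x=0$ is used: it guarantees $\lim_{|x|\to\infty}\phi\phi_x=0$ (since $\phi$ is a difference of two functions in the class, and the limit of $\phi\phi_x$ splits into cross terms that must be controlled — here one should be a little careful, as $uu_x\to0$ for each of $w_1,w_2$ does not instantly give $\phi\phi_x\to0$ unless one also knows the individual products $w_i\partial_xw_j\to0$; I would either strengthen the class statement to assume $u$ and $u_x$ are each bounded with $u\to0$, or simply note that in the intended application the solutions decay exponentially so all such cross terms vanish). Granting the boundary term vanishes,
\begin{align*}
    \int_{\mathbb{R}}\phi\phi_{xx}\,dx=-\int_{\mathbb{R}}\phi_x^2\,dx\leq0.
\end{align*}
Using the hypothesis that $V(x,t)\leq C$ for some constant $C$ on $\mathcal{D}$, we then get
\begin{align*}
    \frac{d}{dt}\|\phi\|^2\leq 2C\|\phi\|^2.
\end{align*}

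Finally I would apply Gr\"onwall's inequality: since $\|\phi\|^2(0)=0$, we conclude $\|\phi\|^2(t)\leq e^{2Ct}\|\phi\|^2(0)=0$ for all $t$ in the interval, hence $w_1\equiv w_2$. I expect the only genuine obstacle to be the justification of the boundary term at infinity — making rigorous that the decay class is preserved under subtraction and that $\phi\phi_x\to0$, rather than just $w_i\partial_xw_i\to0$. Everything else (integration by parts, the sign of $-\|\phi_x\|^2$, the upper bound on $V$, Gr\"onwall) is routine. A secondary technical point is ensuring enough regularity/integrability of $\phi$, $\phi_x$, $\phi_{xx}$ for the integrations to be legitimate; this is inherited from the solution class and the smoothness assumptions on $V$ carried over from Lemma \ref{existence} via the transformation (\ref{transform}).
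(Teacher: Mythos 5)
Your proposal is correct and follows essentially the same route as the paper: an energy estimate for the difference $\phi=w_1-w_2$, integration by parts using the decay class to kill the boundary term, and the bound $V\leq C$; the only cosmetic difference is that the paper handles $C>0$ by the substitution $v=e^{-C(t-t_0)}u$ (reducing to the case $V\leq0$) rather than by invoking Gr\"onwall directly, which is the same integrating-factor argument. Your caveat that $\lim_{|x|\to\infty}w_i\partial_xw_i=0$ for each solution does not by itself give $\lim_{|x|\to\infty}\phi\phi_x=0$ is a genuine subtlety that the paper's proof silently glosses over, so flagging it (and proposing to strengthen the class) is a point in your favor rather than a defect.
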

\begin{proof}
Case I: Assume $V(x,t)\leq0$ in
$\mathcal{D}$. Suppose there exist two distinct solutions to
(\ref{kolmogorov}), say $u_1$ and $u_2$. Denote
$\eta:=u_1-u_2$, and $\eta$ satisfies
\begin{align}\label{eta-equation}
    \eta_t=\eta_{xx}+V(x,t)\eta,
\end{align}
in $\mathcal{D}$ with the initial condition $\eta(x,t_0)=0$. Use
the standard energy estimate, i.e. multiplying (\ref{eta-equation})
with $\eta$ and integrating with respect to $x$ in $\mathbb{R}$:
\begin{align*}
    \frac12||\eta||_t^2=-||\eta_x||^2+\int_{\mathbb{R}}V(x,t)\eta^2dx
        \leq-||\eta_x||^2\leq0,
\end{align*}
by the integration by parts, and the facts that
$\lim_{|x|\rightarrow\infty}\eta\eta_x=0$ and $V(x,t)\leq0$ in
$\mathcal{D}$. This yields that
\begin{align*}
    ||\eta||^2(t)\leq||\eta||^2(t_0),
\end{align*}
for $t\in(t_0,t_1)$. With the fact that $\eta(x,t_0)=0$, we conclude
that $\eta\equiv0$ in $\mathcal{D}$, i.e. $u_1\equiv
u_2$.

Case II: Assume $V(x,t)\leq C$, for some $C>0$. We use the
strategy in the proof of Proposition \ref{existence prop}. Let
$v(x,t)=e^{-C(t-t_0)}u(x,t)$, then $v$ satisfies (\ref{kolmogorov}) with the potential $V(x,t)-C\leq0$ in $\mathcal{D}$.
By case I, we conclude the uniqueness of $v$, so does
$u$.
\end{proof}
\begin{remark}
     The similar conditions as in Proposition \ref{existence prop} are derived to guarantee the well-posedness of the ``pathwise-robust" DMZ equation in \cite{YY3} and to establish the convergence of our algorithm in \cite{LY}. They essentially require that $h$ has to grow relatively faster then $f$. They are not restrictive in the sense
    that most of the polynomial sensors are included. For example,
    $f(x)=f_0x^j$, $g(x)=g_0(1+x^2)^k$ and $h(x)=h_0x^l$, with
    $S,Q>0$, $f_0,g_0$ and $h_0$ are constants, $j,k,l\in\mathbb{N}$, provided $l>\max\left\{\frac{j-1}2,2k-1\right\}$.
\end{remark}

\subsection{Translating factor $\beta$ and moving-window technique} 

As we mentioned in the introduction, the untranslated Hermite functions with the suitable scaling factor could resolve functions concentrated in the neighborhood of the origin accurately and effectively. However, the states of the NLF problems could be driven to left and right during the on-line experiments. It is not hard to imagine that the ``peaking" area of the density function escapes from the current ``window".

The translating factor $\beta$ is introduced under the circumstance that the function is peaking far away from the ``window" covered by the current Hermite functions. We translate the current Hermite functions to the ``support" of the function, by letting the translating factor $\beta$ near the ``peaking" area of the function. 

\begin{table}
	\caption{Trunction error v.s. the ``peaking" $p_0$ of the Gaussian function $f(x)=e^{-\frac12(x-p_0)^2}$}\label{table}
\begin{center}
\begin{tabular}{|c||c|c|}
	\hline
	$p_0$& error$_0$ & error$_3$\\\hline
	$-1$&$3.3\times10^{-13}$&$1.1\times10^{-3}$\\\hline
	$0$&$8.2\times10^{-15}$&$7.7\times10^{-6}$\\\hline
	$1$&$1.6\times10^{-13}$&$1.8\times10^{-9}$\\\hline
	$2$&$1.8\times10^{-9}$&$3.3\times10^{-13}$\\\hline
	$3$&$7.7\times10^{-6}$&$8.2\times10^{-15}$\\\hline
	$4$&$1.1\times10^{-3}$&$1.6\times10^{-13}$	\\\hline
\end{tabular}
\end{center}
	The scaling factor is chosen to be $1$ according to the guideline in section II.B and the truncation error is $N=24$. The truncation errors with different translating factor $\beta$ is denoted as error$_\beta$, which is defined as $||f-\sum_{n=0}^N\hat{f}_nH_n^{\alpha,\beta}||$.
\end{table}

In Table \ref{table}, we list the truncation error of the Gaussian function $f(x)=e^{-\frac12(x-p_0)^2}$ with various $p_0=-1,0,\cdots,4$ and different translating factors $\beta=0$ or $3$. According to the guidelines in section II.B, the scaling factor is $\alpha=1$ and the truncation mode $N=24$. As shown in the table, the further the function is peaking away from the origin, the larger the error is with untranslated Hermite functions. But with appropriate translating factor, the function could be resolved very well with the {\it same} scaling factor, for example, error$_3\approx10^{-16}$ for $f(x)=e^{-\frac12(x-3)^2}$. 

Indeed this fact motivates the idea of moving-window technique. The suitable width of the window could be pre-determined if the trunction error of the density function v.s. various ``peaking" $p_0$ is investigated beforehand. To be more precise, suppose we know the asymptotical behavior of the density function of the NLF problem from the asymptotical analysis, say $\sim e^{-px^k}$, with some $p>0$, $k\geq2$. According to the guideline in section II.B, the suitable scaling factor $\alpha$ and the truncation mode $N$ with $\beta=0$ could be chosen. With these parameters, the similar table as Table \ref{table} could be obtained, i.e. the truncation error (error$_0$) of the function $e^{-p(x-p_0)^k}$ v.s. various $p_0$. If the error tolerance is given, then the appropriate width of the window is obtained according to the table. Take Table \ref{table} as a concrete example. If the asymptotical behavior of the density function is $e^{-\frac12x^2}$, then the scaling factor $\alpha=1$ and the truncation mode $N=24$. Suppose we set the error tolerance to be $10^{-5}$, then the suitable width of the window would be $3+3=6$, from the first two column of Table \ref{table}. The window covers the origin would be $[-3,3]$.  

\begin{figure*}
	\begin{center}
	\includegraphics[scale = 0.79]{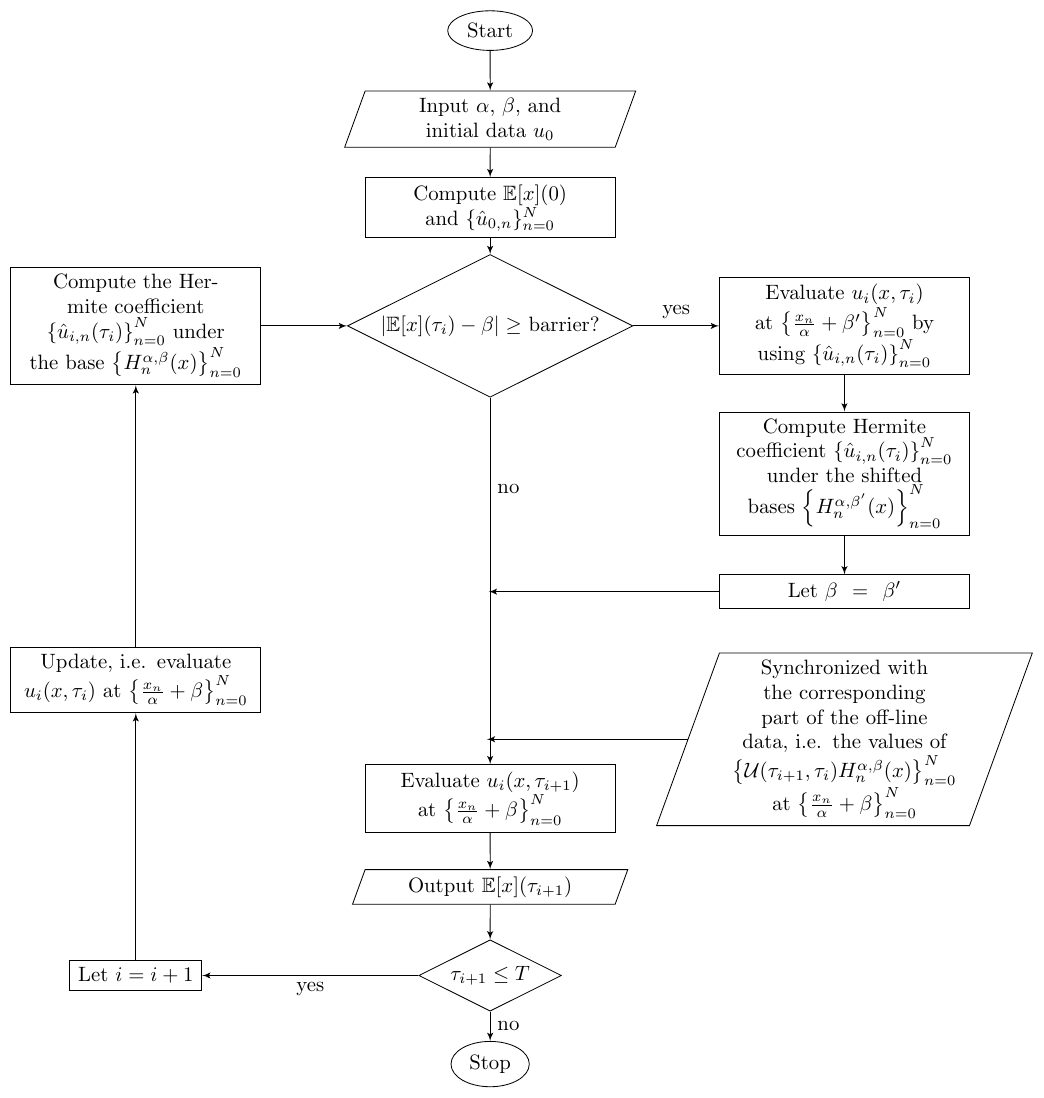}
          \caption{The flowchart of our algorithm, where $\beta'\in\{\beta_j\}_{j=0}^J$.}
          \label{fig-flowchart}
	\end{center}
\end{figure*}

Our algorithm with moving-window technique is illustrated in the flowchart Fig. \ref{fig-flowchart}. It reads as follows. Without loss of generality, assume that the expectation of the initial distribution of the state is near $0$. During the experimental time, say $[0,T]$, the state remains inside some bounded interval $[-L,L]$, for some $L>0$. We first cover the neighborhood of $0$ by the untranslated Hermite functions $\{H_n^{\alpha,0}\}_{n=0}^N$, where $\alpha$, $N$ can be chosen according to the guidelines in section II.B. With the given error tolerance, the suitable width of the window could be pre-defined, denoted as $L_w$.  If $[-L,L]\subset[-L_w,L_w]$, then no moving-window technique is needed. Hence, the on-line experiment runs always within the left half loop in Fig. \ref{fig-flowchart}. Otherwise, $\{\beta_j\}_{j=0}^J$, for some $J>0$, need to be prepared beforehand, such that $[-L,L]\subset\cup_{j=0}^J\left(-L_w+\beta_j,\beta_j+L_w\right)$. The off-line data corresponding to different intervals $\left(-L_w+\beta_j,\beta_j+L_w\right)$ have to be pre-computed and stored ahead of time. During the on-line experiment, if the expectation of the state $\mathbb{E}[x_t]$ moves accross the boundary of the current ``window" (the condition in the rhombic box in Fig. \ref{fig-flowchart} is satisfied), the current ``window" is shifted to the nearby window where $\mathbb{E}[x_t]$ falls into. That is, the right half loop in Fig. \ref{fig-flowchart} is performed once.

Let us analyze the computational cost of our algorithm. Notice that only the storage capacity of the off-line data and the number of the flops for on-line performance need to be taken into consideration in our algorithm. Without loss of generality, let us assume as before $\mathbb{E}[x](0)$ is near $0$ and our state is inside $[-L,L]\subset\cup_{j=0}^J\left(-L_w+\beta_j,\beta_j+L_w\right)$. For simplicity and clarity, let us assume further that
\begin{enumerate}
	\item The operator $\left(L-\frac12h^TS^{-1}h\right)$ is not explicitly time-dependent;
	\item The time steps are the same, i.e. $\tau_{i+1}-\tau_i=\triangle t$.
\end{enumerate}
 For the storage of the off-line data, on each interval $\left(-L_w+\beta_j,\beta_j+L_w\right)$, it requires to store $(N+1)^2$ floating point numbers. Hence, the total $(J+1)$ intervals requires to store $(J+1)(N+1)^2$ floating point numbers. As to the number of the flops in the on-line computations, if no moving-window technique is adopted during the experiment, for each time step, it requires $\mathcal{O}((N+1)^2)$ flops. The number of the flops to complete the experiment during $[0,T]=\cup_{i=0}^{k-1}[\tau_i,\tau_{i+1}]$ is $\mathcal{O}(k(N+1)^2)$. Suppose the number of shifting the windows during $[0,T]$ is $P$, then the total number of flops is $\mathcal{O}\left((k+P)(N+1)^2\right)$.
\begin{remark}
	If either assumption 1) or 2) is not satisfied, then the real time manner won't be affected. That is, the number of the flops in the on-line experiment remains the same. But the off-line data will take more storage as the trade-off. To be more specific, on each interval $\left(-L_w+\beta_j,\beta_j+L_w\right)$, it requires to store $k\times(N+1)^2$ floating point numbers, where $k$ is the total number of time steps. Therefore, the total storage is $k(J+1)(N+1)^2$ floating point numbers.
\end{remark}

\subsection{Numerical simulations}

In this subsection, we shall solve two NLF problems by our algorithm: the almost linear sensor and the cubic sensor. Since the drift term could always be absorbed into the potential $V(x)$ by the transformation (\ref{transform}), for simplicity, in our examples, we set $f\equiv0$. Our algorithm is compared with the particle filters (PF) in both examples. The PF is implemented based on the algorithm described in \cite{AMGC}. And the systematic resampling is adopted if the effective sample size drops below $50\%$ of the total number of particles. As we shall see, our algorithm surpasses the PF in the real-time manner.

\subsubsection{Almost linear filter} We start from the signal observation model
\begin{equation*}
   \left\{ \begin{aligned}
        dx_t &= dv_t\\
        dy_t &= x_t(1+0.25\cos{x_t})dt+dw_t,
\end{aligned} \right.
\end{equation*} 
where $x_t$, $y_t\in\mathbb{R}$, $v_t$, $w_t$ are scalar Brownian motion processes with $E[dv_t^Tdv_t]=1$, $E[dw_t^Tdw_t]=1$. Suppose the signal at the beginning is somewhere near the origin.  

The corresponding FKE in this case is 
\begin{align}\label{kolmogorov for almost linear}
            u_t=\frac12u_{xx}-\frac12x^2(1+\cos{x})^2u
   \end{align}
Assume further that the initial distribution of $x_0$ is $u_0(x)=e^{\frac{-x^2}2}$. This assumption is not crucial at all. The non-Gaussian ones, for example $u_0(x)=e^{\frac{-x^4}2}$, will give the similar results as the Gaussian one.

It is easy to see that the asymptotical behavior of the solution to (\ref{kolmogorov for almost linear}) is $e^{-\frac{x^2}2}$. With the guidelines in section II.B, we choose $\alpha=1$, $\beta=0$ and $N=25$ for the starting interval. We shall run the experiment for the total time $T=20s$. Thus, we expect the density function probably will move out of the starting interval. Table \ref{table} suggests that the appropriate width of the window should be $3$, if the error tolerance is set to be $10^{-5}$. We shall overlap the adjacent windows a little bit to prevent frequent shifting of windows. Let us take the width of the  overlaped region to be $0.5$. Therefore, as the preparation for the moving-window technique, we shall prepare the off-line data for $[-19.5,-13.5]$, $[-14, -8]$, $[-8.5, -2.5]$, $[-3,3]$, $[2.5, 8.5]$, $[8,14]$ and $[13.5,19.5]$. The correpsonding $\beta$s are $-16.5, -11, -5.5, 0, 5.5, 11$ and $16.5$. The barrier in the rhombic box in the flowchart Fig \ref{fig-flowchart} should be $3$ (the width of the ``window").
\begin{figure}[!t]
	 \centering
	\includegraphics[trim = 30mm 80mm 20mm 85mm, clip, scale=0.6]{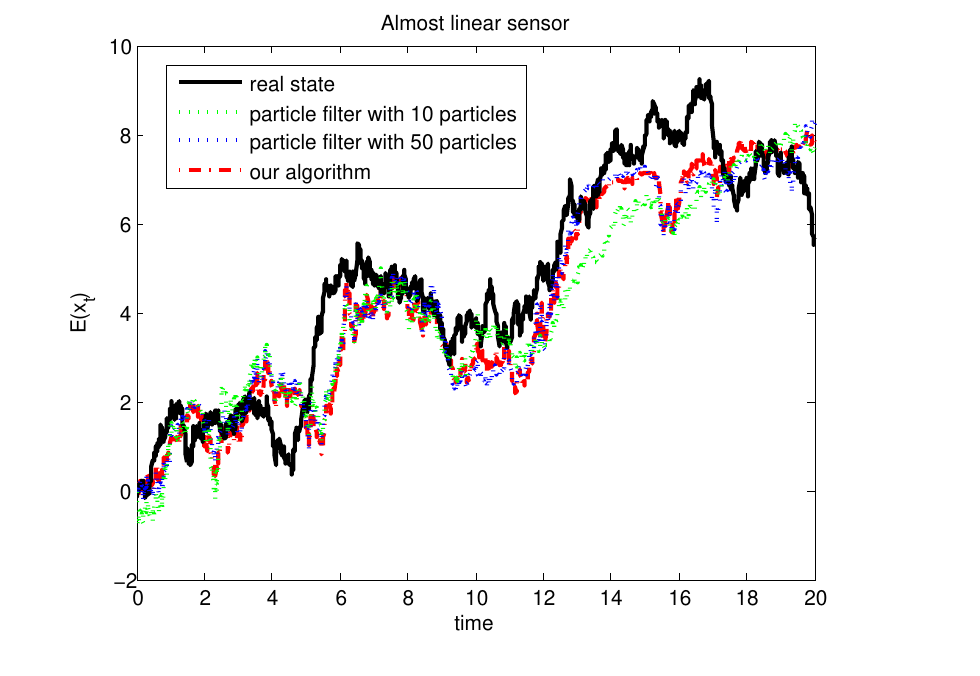}
          \caption{Almost linear filter is investigated with our algorithm and the PF with $10$ and $50$ particles. The total experimental time is $T=20s$. And the update time is $\triangle t=0.01$.}
          \label{fig-almostlinear}
\end{figure}

Our algorithm is compared with the PF with $10$ or $50$ particles in Fig. \ref{fig-almostlinear} for the total experimental time $T=20$s. The time step is $\triangle t=0.01$s. All three filters show acceptable experimental results. It is clear (between time $12$ to $18$) that the PF with $50$ particles gives closer estimation to our algorithm than that with $10$ particles. The mean square errors of our algorithm is about $1.046$, while those of the PF with $10$ and $50$ particles are $1.434$ and $1.086$, respectively. As to the efficiency, our algorithm is superior to the PF, since the CPU times of the PF with $10$ and $50$ particles are $1.70$s and $10.04$s respectively, while that of our algorithm is only $1.02$s. As to the storage, the size of the binary file to keep the off-line data is only $35.5$kB. During this particular on-line experiment, the window has been shifted for $13$ times, which can't be observable from the figure at all. And it seems that the moving-window technique doesn't affect the real-time manner of our algorithm. 
 
\begin{figure}[!t]
	 \centering
	\includegraphics[trim = 30mm 85mm 30mm 85mm, clip, scale=0.6]{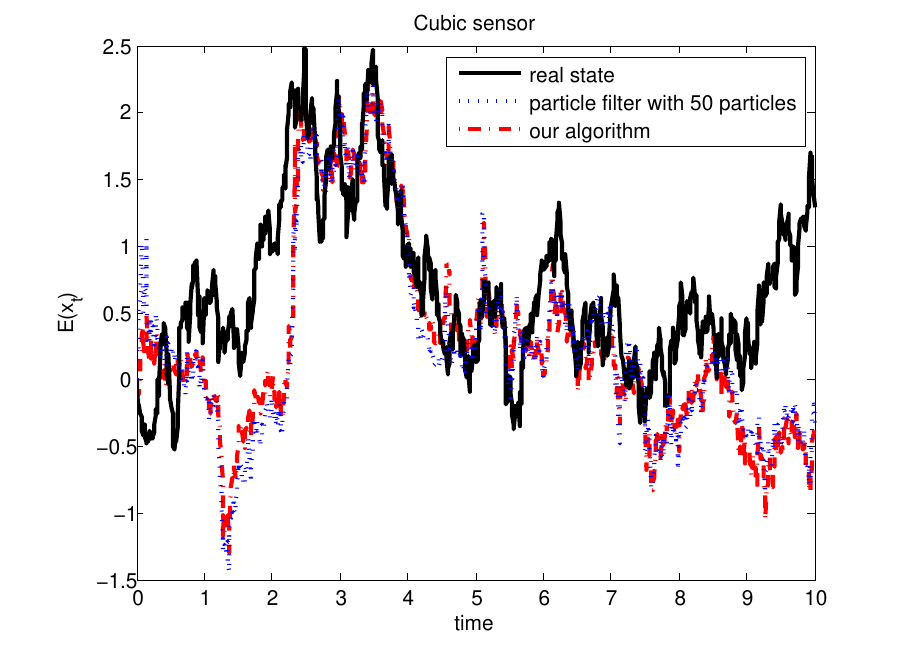}
	\caption{Cubic sensor in the channel is experimented for $T=10$, with the time step $\triangle t = 0.01$s, by both the PF and our algorithm. }
	\label{fig-cubic}
\end{figure}
\begin{figure}[!t]
	 \centering
	\includegraphics[trim = 30mm 85mm 30mm 85mm, clip, scale=0.6]{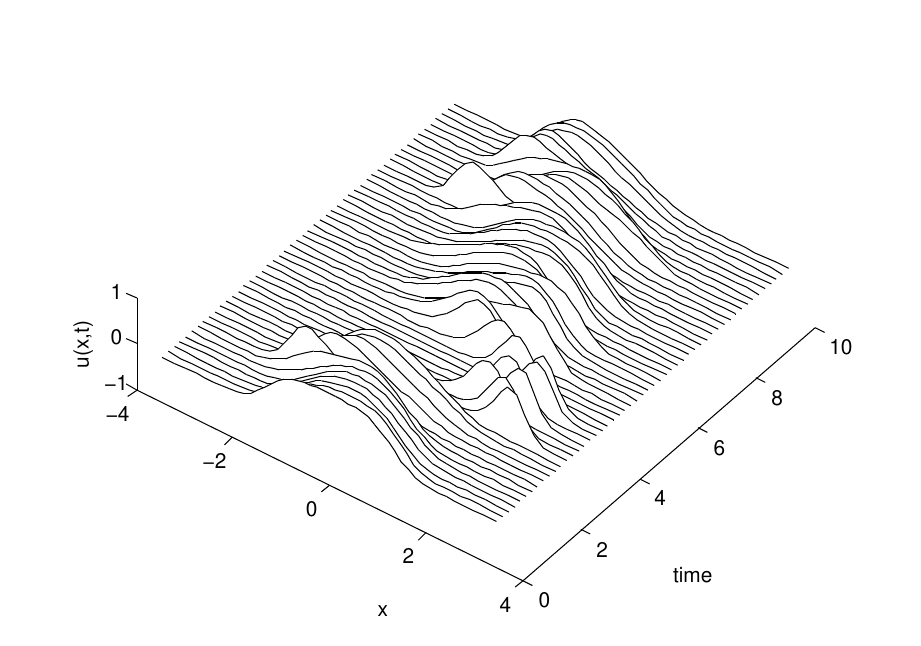}
	\caption{The normalized density functions are plotted every other $0.2$s for the cubic sensor in the channel. }
	\label{fig-waterfall}
\end{figure}

\subsubsection{Cubic sensor in the channel}

We consider cubic sensor in the channel $x_t\in[-3,3]$:
\begin{equation}\label{cubic sensor}
   \left\{ \begin{aligned}
        dx_t &= dv_t\\
        dy_t &= x_t^3dt+dw_t,
\end{aligned} \right.
\end{equation} 
where $x_t$, $y_t\in\mathbb{R}$, $v_t$, $w_t$ are scalar Brownian motion processes with $E[dv_t^Tdv_t]=1$, $E[dw_t^Tdw_t]=1$. Assume the initial state is somewhere near $0$. 

The FKE is 
\begin{align}\label{kolmogorov cubic}
	u_t=\frac12u_{xx}-\frac12x^6u.
\end{align}
Furthermore, we assume the initial distribution is $u_0(x)=e^{-x^4/4}$. Since the state is inside the channel, we set our translating factor $\beta=0$ and the moving-window technique won't be used. According to section II.B, we choose the scaling factor
$\alpha\approx2^{\frac32}\left(\frac{\ln{10}}4\right)^{\frac14}\approx2.4637$,
and the truncated mode $N\approx45$.

In Fig. \ref{fig-cubic}, we compare our algorithm with the PF with $50$ particles for $T=10$s. The observation data come in every $0.01$s. Fig. \ref{fig-cubic} reads that both filters work very well. The result of our algorithm nearly overlaps with that of the PF all the time. The mean square error of our algorithm is $0.517$, while that of the PF with $50$ particles is $0.559$. The CPU time of our algorithm is $4.90$s, while that of the PF is $37.17$s. With our algorithm, the on-line computational time for every estimation of the state is around $0.001$s, which is $10$ times less than the update time $0.01$s. This indicates that our algorithm is indeed a real-time solver. The normalized density functions, which is defined as $\frac{u(x,t)}{\max_{x\in\mathbb{R}}{u(x,t)}}$, have been plotted every other $0.2$s in Fig \ref{fig-waterfall}.

\section{Conclusions}

In this paper, we first investigate the HSM applied to the 1D FKE. It is well-known that the choice of the scaling factor $\alpha$ is crucial to the resolution of HSM. We give a practical guidelines to help choosing the suitable one. The convergence rate of the HSM has been shown rigorously and has been verified by a numerical experiment. As an important application, we solve the NLF problem, by using the algorithm in \cite{LY}, in the last section, where solving 1D FKE serves as the off-line computation. To capture the state even if it drifts out of the ``window", translating factor of Hermite functions and the moving-window technique are introduced. The translating factors help the switch of the windows back and forth easier, according to the ``support" of the density function of the state. We analyzed the computational complexity of our algorithm in detail, with respect to the storage capacity of off-line data and the number of flops of the on-line computations. Finally, two online experiments -- almost linear filtering and cubic sensor in the channel -- are reported. The feasibility and efficiency of our algorithm are verified numerically, which surpasses the particle filter as a real-time solver.  



%

\appendices
\section{The detailed formulation of our algorithm}

\renewcommand{\thetheorem}{A.\arabic{theorem}}
\renewcommand{\theequation}{A.\arabic{equation}}
\renewcommand{\theremark}{A.\arabic{remark}}
\setcounter{theorem}{0}
\setcounter{remark}{0}

\setcounter{equation}{0}

Starting from the signal model \eqref{Ito SDE}, the DMZ equation \eqref{DMZ eqn} is derived for the unnormalized density function $\sigma(x,t)$ of the states $x_t$ conditioned on the observation history $Y_t=\{y_s:0\leq s\leq t\}$. 
In real applications, one is more interested in the robust state estimators. Hence, for given observation path $y_t$, let us make an invertible
exponential transformation
\begin{align}\label{Rozovsky's transformation}
    \sigma(x,t) = \exp{[h^{T}(x,t)S^{-1}(t)y_t]}\rho(x,t).
\end{align}
The ``pathwise-robust" DMZ equation is obtained:
\begin{equation}\label{robust DMZ eqn}
   \left\{ \begin{aligned}
        \frac{\partial\rho}{\partial t}(x,t)&+\frac{\partial}{\partial
        t}(h^{T}S^{-1})^{T}y_t\rho(x,t)\\
       =&\exp{(-h^{T}S^{-1}y_t)}\left[L-\frac12h^{T}S^{-1}h\right]\\
	&\cdot[\exp{(h^{T}S^{-1}y_t)\rho(x,t)}]\\
        \rho(x,0) &= \sigma_0(x).
\end{aligned} \right.
\end{equation}

The exact solution to (\ref{robust DMZ eqn}), generally speaking, doesn't have a closed form. Hence, we developed an efficient algorithm to construct a good approximation in \cite{LY}.

Let us assume that we know the observation time sequence $0=\tau_0<\tau_1<\cdots<\tau_k=T$ apriorily. But the observation data $\{y_{\tau_i}\}$ at each sampling time $\tau_i$, $i=0,\cdots,k$ are unknown until the on-line experiment runs. We call the computation ``off-line", if it can be performed without any on-line experimental data (or say pre-computed); otherwise, it is called ``on-line" computations. One only concerns the computational complexity of the on-line computations, since this hinges the success of ``real time" application. 

 Denote the observation time sequence as $\mathcal{P}_k=\{0=\tau_0< \tau_1<\cdots<\tau_k=T\}$. Let $\rho_i$ be the solution of the robust DMZ
equation with $y_t=y_{\tau_{i-1}}$ on the interval $\tau_{i-1}\leq
t\leq\tau_i$, $i=1,2,\cdots,k$
\begin{equation}\label{robust DMZ eqn freezed}
   \left\{ \begin{aligned}
        \frac{\partial\rho_i}{\partial t}(x,t)&+\frac{\partial}{\partial
        t}\left(h^{T}S^{-1}\right)^{T}y_{\tau_{i-1}}\rho_i(x,t)\\
        =&\exp{\left(-h^{T}S^{-1}y_{\tau_{i-1}}\right)}\left[L-\frac12h^{T}S^{-1}h\right]\\
	&\cdot\left[\exp{\left(h^{T}S^{-1}y_{\tau_{i-1}}\right)\rho_i(x,t)}\right]\\
        \rho_1(x,0) &= \sigma_0(x),\\
        \textup{or}\phantom{\rho_1(x,0)}&\\
        \rho_i(x,\tau_{i-1}) &= \rho_{i-1}(x,\tau_{i-1}),\quad\textup{for}\
        i=2,3,\cdots,k.
\end{aligned} \right.
\end{equation}
Define the norm of $\mathcal{P}_k$ by $|\mathcal{P}_k|=\sup_{1\leq i\leq k}(\tau_i-\tau_{i-1})$. Intuitively, as $|\mathcal{P}_k|\rightarrow0$, we have
$$\sum_{i=1}^k\chi_{[\tau_{i-1},\tau_i]}(t)\rho_i(x,t)\rightarrow\rho(x,t)$$
in some sense, for all $0\leq t\leq T$, where $\rho(x,t)$ is the exact
solution of (\ref{robust DMZ eqn}). To maintain the real time manner, our algorithm resorts to the following proposition.
\begin{proposition}
    For each $\tau_{i-1}\leq t<\tau_i$, $i=1,2,\cdots,k$, $\rho_i(x,t)$ satisfies \textup{(\ref{robust DMZ eqn freezed})} if and only if
    \begin{equation}\label{Rozovsky's reverse transformation}
        u_i(x,t) = \exp{\left[h^{T}(x,t)S^{-1}(t)y_{\tau_{i-1}}\right]}\rho_i(x,t),
    \end{equation}
satisfies the FKE \eqref{Kolmogorov equation}.
\end{proposition}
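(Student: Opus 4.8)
The plan is a direct substitution-and-cancellation computation, exploiting that the exponential factor relating $\rho_i$ and $u_i$ never vanishes, so that every manipulation is reversible and the ``if and only if'' is automatic. First I would abbreviate $\phi(x,t):=h^{T}(x,t)S^{-1}(t)y_{\tau_{i-1}}$, a scalar-valued function of $(x,t)$ since the observation $y_{\tau_{i-1}}$ is frozen on the interval $[\tau_{i-1},\tau_i]$; then \eqref{Rozovsky's reverse transformation} reads $u_i=e^{\phi}\rho_i$, equivalently $\rho_i=e^{-\phi}u_i$, and one records the identity $\frac{\partial}{\partial t}\big(h^{T}S^{-1}\big)^{T}y_{\tau_{i-1}}=\partial_t\phi$.

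Next I would substitute $\rho_i=e^{-\phi}u_i$ into the left-hand side of \eqref{robust DMZ eqn freezed}. Differentiating in $t$ produces $-(\partial_t\phi)e^{-\phi}u_i+e^{-\phi}\partial_t u_i$, and the extra zeroth-order term $\big(\partial_t(h^{T}S^{-1})^{T}y_{\tau_{i-1}}\big)\rho_i=(\partial_t\phi)e^{-\phi}u_i$ is exactly what is needed to cancel the first summand, so the whole left-hand side collapses to $e^{-\phi}\partial_t u_i$. On the right-hand side of \eqref{robust DMZ eqn freezed}, the bracket $\exp(h^{T}S^{-1}y_{\tau_{i-1}})\rho_i$ equals $e^{\phi}\rho_i=u_i$ by definition, so the right-hand side is literally $e^{-\phi}\big(L-\tfrac12h^{T}S^{-1}h\big)u_i$, with no conjugation of the operator taking place. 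Equating the two sides gives $e^{-\phi}\partial_t u_i=e^{-\phi}\big(L-\tfrac12h^{T}S^{-1}h\big)u_i$, and multiplying through by the nowhere-vanishing $e^{\phi}$ yields precisely the FKE \eqref{Kolmogorov equation}. Since $e^{\pm\phi}$ is smooth and strictly positive, each of these steps is an equivalence, which establishes both directions of the proposition at once.

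For completeness I would also remark that the transformation is consistent with the data attached to \eqref{robust DMZ eqn freezed}: evaluating \eqref{Rozovsky's reverse transformation} at $t=\tau_{i-1}$, using $\rho_i(x,\tau_{i-1})=\rho_{i-1}(x,\tau_{i-1})$ together with the analogous identity $u_{i-1}=e^{h^{T}S^{-1}y_{\tau_{i-2}}}\rho_{i-1}$, recovers the update rule \eqref{YY algorithm i} for $i\geq2$, while for $i=1$ it gives $u_1(x,0)=\sigma_0(x)$ because $y_0=0$.

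There is really no serious obstacle here: once the notation $\phi$ is introduced the argument is a one-line computation. The only point demanding a little care is the bookkeeping of transposes in the multi-dimensional case and confirming that the additional first-order-in-$t$ term on the left of \eqref{robust DMZ eqn freezed} is genuinely $(\partial_t\phi)\rho_i$; everything else is routine.
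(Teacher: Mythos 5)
Your computation is correct and is exactly the argument the paper intends (the proof itself is deferred to \cite{LY}, but the transformation \eqref{Rozovsky's reverse transformation} is designed precisely so that the $(\partial_t\phi)\rho_i$ term cancels and the bracket on the right of \eqref{robust DMZ eqn freezed} collapses to $u_i$, as you show). The reversibility via the nowhere-vanishing factor $e^{\phi}$ and the consistency check recovering \eqref{YY algorithm i} are both handled properly, so nothing is missing.
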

The initial data need to be updated as \eqref{YY algorithm i}, followed from \eqref{robust DMZ eqn freezed}.

With the observation time sequence known $\{\tau_i\}_{i=1}^k$, we obtain a sequence of two-parameter semigroup $\{\mathcal{U}(t,\tau_{i-1})\}_{i=1}^k$, for $\tau_{i-1}\leq t<\tau_i$, generated by the family of operators $\{L-\frac12h^{T}S^{-1}h\}_{t\geq0}$. The off-line computation in our algorithm is to pre-compute the solutions of \eqref{Kolmogorov equation} at time $t=\tau_{i+1}$, denoted as $\{\mathcal{U}(\tau_{i+1},\tau_i)\phi_l\}_{l=1}^\infty$, where $\{\phi_l(x)\}_{l=1}^\infty$ (chosen as the initial data at $t=\tau_i$) is a set of complete orthonormal base in $L^2(\mathbb{R}^n)$. These data should be stored in preparation of the on-line computations.

The on-line computation in our algorithm is consisted of two parts at each time step $\tau_{i-1}$, $i=1,\cdots,k$. 
\begin{itemize}
	\item Project the initial condition $u_i(x,\tau_{i-1})\in L^2(\mathbb{R}^n)$ at $t=\tau_{i-1}$ onto the base $\{\phi_l(x)\}_{l=1}^\infty$, i.e., $u_i(x,\tau_{i-1})=\sum_{l=1}^\infty\hat{u}_{i,l}\phi_l(x)$. Hence, the
solution to (\ref{Kolmogorov equation}) at $t=\tau_i$ can be expressed as
\begin{align}\notag\label{proj to basis}
u_i(x,\tau_i)=&\mathcal{U}(\tau_i,\tau_{i-1})u_i(x,\tau_{i-1})\\
	=&\sum_{l=1}^\infty\hat{u}_{i,l}\left[\mathcal{U}(\tau_i,\tau_{i-1})\phi_l(x)\right],
\end{align}
where $\{\mathcal{U}(\tau_i,\tau_{i-1})\phi_l(x)\}_{l=1}^\infty$ have already been computed off-line.
	\item Update the initial condition of \eqref{Kolmogorov equation} at $\tau_i$ with the new observation $y_{\tau_i}$. Let us specify the observation updates (the initial condition of
(\ref{Kolmogorov equation}) ) for each time step. For $0\leq
t\leq\tau_1$, the initial condition is $u_1(x,0)=\sigma_0(x)$. At
time $t=\tau_1$, when the observation $y_{\tau_1}$ is available, 
\begin{align*}
    u_2&(x,\tau_1)\\
	&\overset{\eqref{Rozovsky's reverse transformation}}=\exp{[h^{T}(x,\tau_1)S^{-1}(\tau_1)y_{\tau_1}]}\rho_2(x,\tau_1)\\
	&\overset{\eqref{Rozovsky's reverse transformation}, \eqref{robust DMZ eqn freezed}}=\exp{[h^{T}(x,\tau_1)S^{-1}(\tau_1)y_{\tau_1}]}u_1(x,\tau_1),
\end{align*}
with the fact $y_0=0$. Here, $u_1(x,\tau_1)=\sum_{l=1}^\infty\hat{u}_{1,l}\left[\mathcal{U}(\tau_1,0)\phi_l(x)\right]$, where $\{\hat{u}_{1,l}\}_{l=1}^\infty$ is computed in the previous step, and $\{\mathcal{U}(\tau_1,0)\phi_l(x)\}_{l=1}^\infty$ are prepared by off-line computations.  Hence, we obtain the initial condition $u_2(x,\tau_1)$ of \eqref{Kolmogorov equation} for the next time interval $\tau_1\leq
t\leq\tau_2$. Recursively, the initial condition of \eqref{Kolmogorov equation} for $\tau_{i-1}\leq t\leq\tau_i$ is
\begin{align}\notag\label{IC}
    u_i(x,\tau_{i-1})=&\exp{[h^{T}(x,\tau_{i-1})S^{-1}(\tau_{i-1})}\\
	&\phantom{\exp{aa}}{(y_{\tau_{i-1}}-y_{\tau_{i-2}})]} u_{i-1}(x,\tau_{i-1}),
\end{align}
for $i=2,3,\cdots,k$, where $u_{i-1}(x,\tau_{i-1})=\sum_{l=1}^\infty\hat{u}_{i-2,l}\left[\mathcal{U}(\tau_{i-1},\tau_{i-2})\phi_l(x)\right]$.
\end{itemize}
The approximation of $\rho(x,t)$, denoted as
$\hat{\rho}(x,t)$, is obtained
\begin{align}\label{hatrho}
    \hat{\rho}(x,t)=\sum_{i=1}^k\chi_{[\tau_{i-1},\tau_i]}(t)\rho_i(x,t),
\end{align}
where $\rho_i(x,t)$ is obtained from $u_i(x,t)$ by (\ref{Rozovsky's reverse
transformation}). And $\sigma(x,t)$ could be recovered by (\ref{Rozovsky's transformation}).

\section{The proof of Theorem \ref{theorem-truncation error}}

\renewcommand{\thetheorem}{B.\arabic{theorem}}
\renewcommand{\theequation}{B.\arabic{equation}}
\renewcommand{\theremark}{B.\arabic{remark}}
\setcounter{theorem}{0}
\setcounter{remark}{0}

\setcounter{equation}{0}

\begin{IEEEproof}[Proof of Theorem \ref{theorem-truncation error}]
By induction, we first show that for
$\mu=0$. For any integer $r\geq0$,
\begin{align}\notag\label{mu = 0}
    ||u-P_Nu||^2=&\frac{\sqrt{\pi}}\alpha\sum_{n=N+1}^\infty\hat{u}_n^2\\\notag
        =&\frac{\sqrt{\pi}}\alpha\sum_{n=N+1}^\infty\lambda_{n+1}^{-r}\lambda_{n+1}^r\hat{u}_n^2\\
        \lesssim&\alpha^{-2r-1}N^{-r}||u||_r^2.
\end{align}
Suppose for $1\leq\mu\leq r$, (\ref{truncated error}) holds for
$\mu-1$. We need to show that (\ref{truncated error}) is also valid
for $\mu$. It is clear that
\begin{align}\notag\label{split the seminorm into two parts}
    |u-P_Nu|_\mu\leq&|\partial_xu-P_N\partial_xu|_{\mu-1}\\
	&+|P_N\partial_xu-\partial_xP_Nu|_{\mu-1}.
\end{align}

On the one hand, due to the assumption for $\mu-1$, we apply
(\ref{truncated error}) to $\partial_xu$ and replace $\mu$ and $r$
with $\mu-1$ and $r-1$, respectively:
\begin{align}\notag\label{part I of truncated error}
    |\partial_xu-P_N\partial_xu|_{\mu-1}
	\leq&\alpha^{\mu-r-\frac12}N^{\frac{\mu-r}2}||\partial_xu||_{r-1}\\
        \lesssim&\alpha^{\mu-r-\frac12}N^{\frac{\mu-r}2}||u||_r,
\end{align}
where the last inequality holds with the observation that
\begin{align*}
    ||\partial_xu||_{r-1}^2=\sum_{n=0}^\infty\lambda_{n+1}^{r-1}\widehat{(\partial_xu)}_n^2
\end{align*}
and
\begin{align*}
    \widehat{(\partial_xu)}_n=&\frac\alpha{\sqrt{\pi}}\int_{\mathbb{R}}\partial_xuH_n^{\alpha,\beta}(x)dx\\
	=&-\frac\alpha{\sqrt{\pi}}\int_{\mathbb{R}}u\partial_xH_n^{\alpha,\beta}(x)dx\\
        =&\frac{\alpha\sqrt{\lambda_{n+1}}}{2\sqrt{\pi}}\int_{\mathbb{R}}uH_{n+1}^{\alpha,\beta}(x)dx\\
            &-\frac{\alpha\sqrt{\lambda_n}}{2\sqrt{\pi}}\int_{\mathbb{R}}uH_{n-1}^{\alpha,\beta}(x)dx,\
        \textup{by}\ (\ref{derivative})\\
        =&\frac{\sqrt{\lambda_{n+1}}}2\hat{u}_{n+1}-\frac{\sqrt{\lambda_n}}2\hat{u}_{n-1}.
\end{align*}

On the other hand, by the virtue of (\ref{derivative})
\begin{align*}
    &P_N\partial_xu-\partial_xP_Nu\\
=&P_N\sum_{n=0}^\infty\hat{u}_n\partial_xH_n^{\alpha,\beta}(x)-\sum_{n=0}^N\hat{u}_n\partial_xH_n^{\alpha,\beta}(x)\\
        =&-\frac12\sum_{n=0}^{N-1}\sqrt{\lambda_{n+1}}\hat{u}_nH_{n+1}^{\alpha,\beta}(x)\\
            &+\frac12\sum_{n=0}^{N+1}\sqrt{\lambda_n}\hat{u}_nH_{n-1}^{\alpha,\beta}(x)\\
         &-\left[-\frac12\sum_{n=0}^N\sqrt{\lambda_{n+1}}\hat{u}_nH_{n+1}^{\alpha,\beta}(x)\right.\\
            &\phantom{-[]}\left.+\frac12\sum_{n=0}^N\sqrt{\lambda_n}\hat{u}_nH_{n-1}^{\alpha,\beta}\right]\\
        =&\frac12\sqrt{\lambda_{N+1}}\left[\hat{u}_NH_{N+1}^{\alpha,\beta}(x)+\hat{u}_{N+1}H_N^{\alpha,\beta}(x)\right].
\end{align*}
This yields that
\begin{align}\notag\label{part II of truncated error}
    |P_N&\partial_xu-\partial_xP_Nu|_{\mu-1}^2 \\
	\lesssim&\lambda_{N+1}\left(\hat{u}_N^2|H_{N+1}^{\alpha,\beta}(x)|_{\mu-1}^2+\hat{u}_{N+1}^2|H_N^{\alpha,\beta}(x)|_{\mu-1}^2\right),
\end{align}
due to the property of seminorms. Moreover, we estimate
$\hat{u}_k^2$ and $|H_k^{\alpha,\beta}(x)|_{\mu-1}^2$, for
$k=N,N+1$:
\begin{align}\label{u_N}
    \hat{u}_N^2\leq\sum_{n=N}^\infty\hat{u}_n^2\leq\frac\alpha{\sqrt{\pi}}||u-P_{N-1}u||^2\lesssim\alpha^{-2r}N^{-r}||u||_r^2,
\end{align}
by (\ref{mu = 0}). Similarly,
$\hat{u}_{N+1}^2\lesssim\alpha^{-2r}N^{-r}||u||_r^2$. And
\begin{align}\notag\label{seminorm of H_N}
    |H_N^{\alpha,\beta}|_{\mu-1}^2
        =&||\partial_x^{\mu-1}H_N^{\alpha,\beta}(x)||^2\\\notag
            \lesssim&\alpha^{-1}||H_N^{\alpha,\beta}(x)||_{\mu-1}^2,\
            \textup{by\ Lemma \ref{lemma-seminorm estimate}}\\
        =&\alpha^{-1}\lambda_N^{\mu-1}\leq\alpha^{-1}\lambda_{N+1}^{\mu-1},
\end{align}
since $\widehat{(H_N^{\alpha,\beta})}_k=\delta_{kN}$, for
$k\in\mathbb{Z}^+$. Similarly,
$|H_{N+1}^{\alpha,\beta}|_{\mu-1}^2\lesssim\alpha^{-1}\lambda_{N+1}^{\mu-1}$.
Substitute (\ref{u_N}) and (\ref{seminorm of H_N}) into (\ref{part
II of truncated error}), we get
\begin{align}\notag\label{part II of truncated error2}
    |P_N\partial_xu-\partial_xP_Nu|_{\mu-1}^2
        \lesssim&\alpha^{-2r-1}N^{-r}\lambda_{N+1}^\mu||u||_r^2\\
        \lesssim&\alpha^{2\mu-2r-1}N^{\mu-r}||u||_r^2,
\end{align}
by the fact that $\lambda_N=2N\alpha^2$. Combine (\ref{split the
seminorm into two parts}), (\ref{part I of truncated error}) and
(\ref{part II of truncated error2}), we arrive the conclusion.
\end{IEEEproof}







\begin{thebibliography}{aaaa}
	\bibitem{AMGC} M.~Arulampalam, S.~Maskell, N.~Gordon and T.~Clapp, ``A tutorial on particle filters for online nonlinear/non-gaussian bayesian tracking", {\it IEEE Trans. Signal Process.}, vol. 50, no. 2, pp. 174-188, 2002.
	\bibitem{BC} A.~Bain and D.~Crisan, ``Fundamentals of Stochastic Filtering", vol. 60 of Springer Series in {\it Stochastic Modelling and Applied Probability}, Springer, New York, 2009.
    \bibitem{Be} P.~Besala, ``On the existence of a fundamental solution for a parabolic differential equation with unbounded coefficients", {\it Ann. Polonici Math.}, vol. 29, pp. 403-409, 1975.
    \bibitem{B} J.~Boyd, ``The rate of convergence of Hermite function
    series", {\it Math. Comp.}, vol. 35, pp. 1039-1316, 1980.
    \bibitem{B1} J.~Boyd, ``Asymptotic coefficients of Hermite function
    series", {\it J. Comput. Phys.}, vol. 54, pp. 382-410, 1984.
    \bibitem{BGR} A.~Bensoussan, R.~Glowinski and A.~Rascanu, ``Approximation of the Zakai equation by the splitting up method", {\it SIAM J. Control Optim.}, vol. 28, pp. 1420-1431, 1990.
    \bibitem{BGR2} A.~Bensoussan, R.~Glowinski and A.~Rascanu, ``Approximation of some stochastic differential equations by the splitting up methods", {\it Appl. Math. Optim.}, vol. 25, pp. 81-106, 1992.
    \bibitem{D} T.~Duncan, ``Probability density for diffusion processes with applications to nonlinear filtering theory", Ph. D. dissertation, Stanford Univ., Stanford, CA, 1967.
    \bibitem{FGT} J.~Fok, B.~Guo and T.~Tang, ``Combined hermite spectral-finite difference method for the fokker-planck
    equation", {\it Math. Comp.}, vol. 71, no. 240, pp. 1497-1528, 2001.
    \bibitem{FK} D.~Funaro and O.~Kavian, ``Approximation of some diffusion evolution equation in unbounded domains by Hermite
    function", {\it Math. Comp.}, vol. 37, pp. 597-619, 1991.
    \bibitem{GO} D.~Gottlieb and S.~Orszag, ``Numerical analysis of spectral methods: theory and
    applications", {\it Soc. In. and Appl. Math.}, Philadelphia, 1977.
    \bibitem{GSX} B.-Y.~Guo, J.~Shen and C.-L.~Xu, ``Spectral and pseudospectral approximation using Hermite function: Application in Dirac
    equation", {\it Adv. Comput. Math.}, vol. 19, pp. 35-55, 2003.
    \bibitem{GK} I.~Gyongy and N.~Krylov, ``On the splitting-up method and stochastic partial differential equation", {\it Ann. Probab.}, vol. 31, pp. 564-591, 2003. 
    \bibitem{H} W.~Hopkins, Jr., ``Nonlinear filtering of nondegnerate diffusions with unbounded coefficients", Ph. D. dissertation, Dep. Elec. Eng., Univ. Maryland, College Park, Nov. 1982.
    \bibitem{I} K.~Ito, ``Approximation of the Zakai equation for nonlinear filtering", {\it SIAM J. Control Optim.}, vol. 34, pp. 620-634, 1996.
    \bibitem{IR} K.~Ito and B.~Rozovskii, ``Approximation of the Kushner equation for nonlinear filtering", {\it SIAM J. control Optim.}, vol. 38, pp. 893-915, 2000.
     \bibitem{LMR} S.~Lototsky, R.~Mikulevicius and B.~Rozovskii, ``Nonlinear filtering revisited: a spectral approach", {\it SIAM J. Control Optim.}, vol. 35, no. 2, pp. 435-461, 1997.
	\bibitem{LY} X.~Luo and S.~S.-T.~Yau, ``Complete Real Time Solution of the General Nonlinear Filtering Problem without Memory",  to appear in {\it IEEE Trans. Automat. Control}, 2013.
	\bibitem{M} R.~Mortensen, ``Optimal control of continuous time stochastic
    systems", Ph. D. dissertation, Univ. California, Berkeley, CA, USA, 1996.
    \bibitem{N} N.~Nagase, ``Remarks on nonlinear stochastic partial differential equations: An application of the splitting-up method", {\it SIAM J. Control Optim.}, vol. 33, pp. 1716-1730, 1995.
    \bibitem{SH} J.~Schumer and J.~Holloway, ``Vlasov simulations using velocity-scaled Hermite
    representations", {\it J. Comp. Phys}, vol. 144, pp. 626-661, 1998.
    \bibitem{XW} X.-M.~Xiang and Z.-Q.~Wang, ``Generalized Hermite spectral method and its applications to problems in unbounded domains", {\it SIAM J. Numer. Anal.}, vol. 48, no. 4, pp. 1231-1253, 2010.
   \bibitem{YY3} S.~Yau and S.~S.-T.~Yau, ``Real time solution of nonlinear filtering problem without memory II", {\it SIAM J. Control Optim.}, vol. 47, no. 1, pp. 230-243, 2008.
    \bibitem{Z} M.~Zakai, ``On the optimal filtering of diffusion
    processes", {\it Z. Wahrsch. Verw. Gebiete}, vol. 11, pp. 230-243, 1969.
\end{thebibliography}
%

%

\begin{IEEEbiography}[{\includegraphics[width=1in,height=1.25in,clip,keepaspectratio]{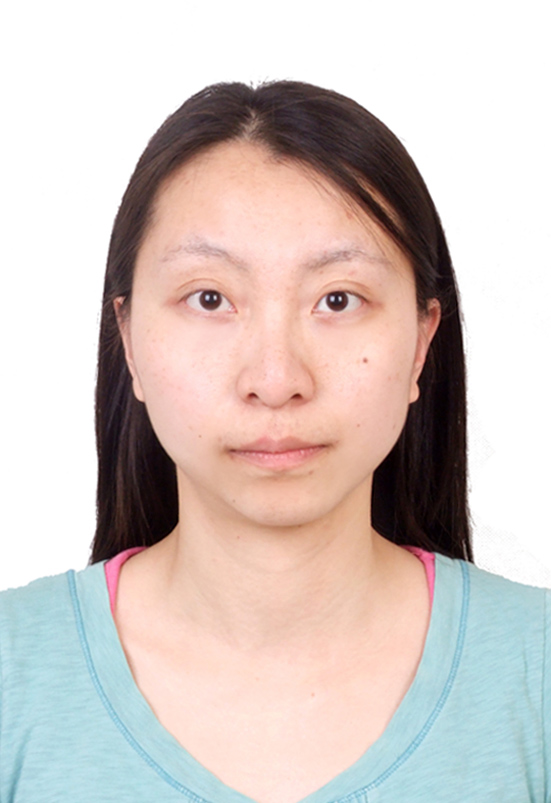}}]{X. Luo} received the B.S. degree and the Ph.D. degree in mathematics from East China Normal University (ECNU), Shanghai, P.R. China, in 2004 and 2010, respectively. As a Ph.D. candidate in ECNU, she joined University of Connecticut and University of Illinois at Chicago (UIC) as visiting scholar in 2008-2009 and 2009-2010, respectively. She is currently pursuing her second Ph.D. degree in applied mathematics from the department of mathematics, statistics and computer science, UIC. 

Dr Luo's research interests include analysis of partial differential equations, nonlinear filtering theory, numerical analysis of spectral methods, sparse grid algorithm and fluid mechanics.
\end{IEEEbiography}
\vfill

\begin{IEEEbiography}[{\includegraphics[width=1in,height=1.25in,clip,keepaspectratio]{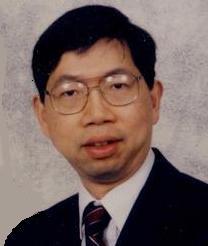}}]{S. S.-T. Yau} (F'03) received the Ph.D. degree in mathematics from the State University of New York at Stony Brook, NY, US, in 1976. He was a member of Institute of Advanced Study at Princeton 1976-1977 and 1981-1982, and a Benjamin Pierce Assistant Professor at Harvard University during 1977-1980. After that, he joined the department of mathematics, statistics and computer science (MSCS), University of Illinois at Chicago (UIC), and served for over 30 years. He was awarded Sloan Fellowship in 1980, Guggenheim Fellowship in 2000, IEEE Fellow Award in 2003 and AMS Fellow Award in 2013. In 2005, he was entitled the UIC distinguished professor. During 2005-2011, he became a joint-professor of department of electrical and computer engineering and MSCS, UIC.  After his retirement in 2012, he joined Tsinghua University, Beijing, P. R. China, where he is a full-time professor in department of mathematical science.  

Dr Yau's research interests include nonlinear filtering, bioinformatics, complex algebraic geometry, CR geometry and singularities theory.

Dr Yau is the Managing Editor and founder of {\it Journal of Algebraic Geometry} from 1991, and the Editors-in-Chief and founder of {\it Communications in Information and Systems} from 2000 till now. He was the General Chairman of IEEE International Conference on Control and Information, which was held in the Chinese University of Hong Kong in 1995. 
\end{IEEEbiography}





\end{document}